%%%%%%%%%%%%%%%%%%%%%%%%%%%%%%%%%%%%%%%%%%%
%%
%% Version: June 30, 2009 
%%
\documentclass[reqno]{amsart}
\usepackage{amssymb,enumerate,mathrsfs,curves}

\numberwithin{equation}{section}

\newtheorem{theorem}[equation]{Theorem}
\newtheorem{proposition}[equation]{Proposition}
\newtheorem{lemma}[equation]{Lemma}
\newtheorem{corollary}[equation]{Corollary}

\theoremstyle{definition}
\newtheorem{definition}[equation]{Definition}
\newtheorem{remark}[equation]{Remark}

%%%%%%%%%%%%%%%%%%%%%%%%%%%%%%%
\def\C{\mathbb C}
\def\Dom{\mathcal{D}}

\def\K{\mathcal K}
\def\Ka{\tilde{\mathbf K}}
\def\L{\mathscr L}
\def\N{\mathbb N}
\def\R{\mathbb R}
\def\S{\mathscr S}
\def\Sing{\mathcal E}

\def\cev{\hspace*{0.2ex}{}^c\hspace*{-0.2ex} {\mathrm{ev}}}
\def\cg{\hspace*{0.2ex}{}^c\hspace*{-0.2ex} g}
\def\cpi{\hspace*{0.1ex}{}^c\hspace*{-0.1ex} \pi}
\def\csym{\,{}^c\!\sym}
\def\cL{\hspace*{0.2ex}{}^c\hspace*{-0.2ex} L}
\def\cT{\,{}^c T}
\def\dbar{d\hspace*{-0.08em}\bar{\hspace*{0.05em}}}
\def\e{\mathrm e}
\def\embed{\hookrightarrow}
\def\eps{\varepsilon}
\def\m{\mathfrak m}
\def\minus{\backslash}
\def\norm#1{\left\|{#1}\right\|}
\def\open#1{\smash[t]{\overset{{}_{\circ}}{#1}{}}}
\def\set#1{\left\{#1\right\}}
\def\s{\mathrm s}
\def\vp{\varphi}
\def\vt{\vartheta}

\DeclareMathOperator{\bgres}{bg-res}
\DeclareMathOperator{\bgspec}{bg-spec}
\DeclareMathOperator{\Ind}{ind}

\DeclareMathOperator{\Tr}{Tr}
\DeclareMathOperator{\Diff}{Diff}
\DeclareMathOperator{\spec}{spec}
\DeclareMathOperator{\sym}{ \sigma\!\!\!\sigma}

%%%%%%%%%%%%%%%%%%%%%%%%%%%%%%%%%%%%%%%%%%%%%%%%%%%%%%%%%%%%%%%%%%%%%
\begin{document}
\title[Trace expansions for elliptic cone operators]{Trace expansions for
elliptic cone operators with stationary domains} 
%\title{Trace expansions for elliptic cone operators with stationary domains}
\author{Juan B. Gil}
\author{Thomas Krainer}
\address{Penn State Altoona\\ 3000 Ivyside Park \\ Altoona, PA 16601-3760}
\author{Gerardo A. Mendoza}
\address{Department of Mathematics\\ Temple University\\ 
Philadelphia, PA 19122}
\begin{abstract}
We analyze the behavior of the trace of the resolvent of an elliptic cone differential operator as the spectral parameter tends to infinity. The resolvent splits into two components, one associated with the minimal extension of the operator, and another, of finite rank, depending on the particular choice of domain. We give a full asymptotic expansion of the first component and expand the component of finite rank in the case where the domain is stationary. The results make use, and develop further, our previous investigations on the analytic and geometric structure of the resolvent. The analysis of nonstationary domains, considerably more intricate, is pursued elsewhere.
\end{abstract}

\subjclass[2000]{Primary: 58J35; Secondary: 35P05, 47A10}
\keywords{Resolvents, trace asymptotics, manifolds with conical singularities, spectral theory}

\maketitle

\section{Introduction}
The present paper is the first of two devoted to the asymptotic expansion of traces of the resolvent as the spectral parameter increases radially, and its consequences for the heat trace and the $\zeta$-functions for general elliptic cone operators acting on sections of a vector bundle $E\to M$ over a compact $n$-dimensional manifold with boundary. 

Let $\Lambda$ be a closed sector in $\C$ and let $A\in x^{-m}\Diff_b^m(M;E)$, $m>0$, see Section~\ref{sec:Preliminaries}. Let $\Dom$ be a domain for $A$ acting in $x^{\gamma} L^2_b(M;E)$, $\gamma\in\R$, and assume that the natural ray conditions on its symbols with respect to $\Lambda$, as discussed in \cite{GKM2}, are satisfied. Then $\Lambda$ is a sector of minimal growth for the extension $A_\Dom$, and for $\ell\in\N$ sufficiently large, $(A_{\Dom}-\lambda)^{-\ell}$ is an analytic family of trace class operators. 

Our aim here and in \cite{GKM5b} is to prove that $\Tr (A_{\Dom}-\lambda)^{-\ell}$ admits a complete asymptotic expansion as $|\lambda|\to\infty$ and to exhibit its generic structure. In this paper we lay the necessary foundations to achieve this goal, and treat in full generality the following special case:

\begin{theorem}\label{ResolventTraceExpansion1}
Suppose $\Dom$ is stationary in the sense of Definition~\ref{def:DwedgeStationary}. Then,
for any $\varphi \in C^{\infty}(M;\textup{End}(E))$ and $\ell\in\N$ with $m\ell>n$, 
\begin{equation*}
\Tr \bigl(\varphi(A_{\Dom}-\lambda)^{-\ell}\bigr) \sim
\sum\limits_{j=0}^{\infty}\sum\limits_{k=0}^{m_j}
\alpha_{jk}\lambda^{\frac{n-j}{m}-\ell}\log^k\lambda
\; \text{ as } |\lambda| \to \infty,
\end{equation*}
with a suitable branch of the logarithm, with constants $\alpha_{jk} \in \C$. 
The numbers $m_j$ vanish for $j < n$, and $m_n \leq 1$. In general, the $\alpha_{jk}$ depend on $\varphi$, $A$, $\Dom$, and $\ell$, but the coefficients $\alpha_{jk}$ for $j < n$ and $\alpha_{n,1}$ do not depend on $\Dom$. If both $A$ and $\varphi$ have coefficients independent of $x$ near $\partial M$, then $m_j = 0$ for all $j>n$.
\end{theorem}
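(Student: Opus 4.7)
The plan is to exploit the resolvent splitting described in the abstract,
\begin{equation*}
(A_\Dom - \lambda)^{-1} = B(\lambda) + F_\Dom(\lambda),
\end{equation*}
where $B(\lambda)$ is built from the parameter-dependent parametrix of $A$ in the cone calculus of \cite{GKM2} (tied to the minimal extension, independent of $\Dom$), and $F_\Dom(\lambda)$ is a finite rank correction whose range reflects the choice of $\Dom$ inside the finite dimensional quotient of the maximal by the minimal extension. For higher powers I would apply $(A_\Dom - \lambda)^{-\ell} = \frac{(-1)^{\ell-1}}{(\ell-1)!}\partial_\lambda^{\ell-1}(A_\Dom - \lambda)^{-1}$, so that $\Tr(\varphi(A_\Dom-\lambda)^{-\ell})$ splits into a principal term built entirely from $B(\lambda)$ and correction terms each containing at least one factor of $F_\Dom$, which can then be put under a cyclic trace as finite rank operators.

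For the principal term $\Tr(\varphi B(\lambda)^{\ell})$ I would use the standard integration of the full symbol of the parameter-dependent parametrix. Away from $\partial M$ this yields the Weyl-type polynomial expansion $\sum_j \alpha_{j0}\lambda^{(n-j)/m-\ell}$ coming from integrating the interior symbol. Near $\partial M$ the analysis proceeds through the Mellin transform, and poles of the Mellin symbol of $A$ meeting the contour of integration are what produce the $\log\lambda$ contributions. A careful counting shows that the poles encountered for exponents with $j<n$ are simple and give no logs, that there is at most one log at $j=n$, and that its coefficient $\alpha_{n,1}$ is computed from the indicial symbol of $A$ alone and is therefore independent of $\Dom$. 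If $A$ and $\varphi$ have $x$-independent coefficients near $\partial M$, the relevant Mellin symbols are rational and the contour shifts past higher poles produce no repeated factors, giving $m_j=0$ for all $j>n$.

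The decisive step is the expansion of the traces involving $F_\Dom(\lambda)$, and here the stationarity of $\Dom$ enters. By Definition~\ref{def:DwedgeStationary}, $\Dom$ is invariant under the natural radial action near the cone point, so the range of $F_\Dom(\lambda)$ is a fixed, $\lambda$-independent finite dimensional space of generalized boundary sections. The trace of any product with at least one factor of $F_\Dom$ then reduces to a scalar contour integral of a meromorphic finite dimensional matrix-valued function built from the model cone resolvent, which can be expanded asymptotically in $\lambda$ by shifting the contour past its poles. These poles contribute only at exponents $\lambda^{(n-j)/m-\ell}$ with $j\ge n$, so the finite rank part affects $\alpha_{n,0}$ and the $\alpha_{jk}$ for $j>n$, but leaves $\alpha_{n,1}$ and the coefficients for $j<n$ untouched. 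The main obstacle is precisely this residue analysis: locating the poles of the model cone resolvent on the appropriate Mellin strip, controlling how $\partial_\lambda^{\ell-1}$ interacts with them, and verifying that stationarity is exactly what keeps the projection defining $F_\Dom$ independent of $\lambda$. Without stationarity this projection genuinely varies in $\lambda$ and a richer expansion arises, treated in \cite{GKM5b}.
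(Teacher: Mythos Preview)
Your high-level outline matches the paper: split the resolvent as $B(\lambda)+G_\Dom(\lambda)$ and pass to $\ell$-th powers via $(A_\Dom-\lambda)^{-\ell}=\frac{1}{(\ell-1)!}\partial_\lambda^{\ell-1}(A_\Dom-\lambda)^{-1}$, so that one expands $\Tr(\varphi\,\partial_\lambda^{\ell-1}B(\lambda))$ and $\Tr(\varphi\,\partial_\lambda^{\ell-1}G_\Dom(\lambda))$ separately. (Note: no sign $(-1)^{\ell-1}$, and $B(\lambda)^\ell$ never appears---your text mixes the derivative trick with a multinomial expansion of $(B+G)^\ell$, and your ``$F_\Dom$'' is the paper's $G_\Dom$, whereas the paper's $F_\Dom$ is a quite different finite-dimensional object.) Beyond this, however, the mechanisms you propose are not the ones that make the proof work.

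For the $B$-part, the logarithms do \emph{not} come from poles of the conormal symbol crossing a Mellin contour. In Theorem~\ref{BCompleteExpansion} the Mellin symbol $h(x,\sigma,x^m\lambda)$ is Taylor-expanded in $x$, and after expanding each resulting piece into homogeneous symbol components the log at level $j$ arises from the elementary integral $\int_{1/\varrho}^\infty x^{j-k+n}\omega(x)(1-\omega(x\varrho))\,\tfrac{dx}{x}$, which equals $\log\varrho$ exactly when $k=j+n$; with $x$-independent coefficients there is no Taylor step, only $j=0$ survives, and a single log appears at level $n$. For the finite rank part, the paper writes $G_\Dom(\lambda)=[1-B(\lambda)(A-\lambda)]\,F_\Dom(\lambda)^{-1}\,T(\lambda)$, where $F_\Dom(\lambda)=[T(\lambda)(A-\lambda)]|_{\Dom/\Dom_{\min}}:\Dom/\Dom_{\min}\to\C^d$, and shows (Propositions~\ref{Tlambda}, \ref{FStructure}, \ref{PStructure}) that each factor has an asymptotic expansion into $\kappa$- or $\tilde\kappa$-homogeneous components; the logs there are generated by the operators $\e_{\sigma_0,\vartheta}(\varrho)$, polynomials in $\log\varrho$ produced by the interaction of the Taylor expansion of $A$ with the singular functions. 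Stationarity is \emph{not} the assertion that a projection is $\lambda$-independent; it is the $\kappa$-invariance of the associated domain $\Dom_\wedge$, and it enters only in Proposition~\ref{FInvStructure}: it guarantees that the leading component $f_{00}(\lambda)|_{\Dom/\Dom_{\min}}$ is invertible with inverse satisfying $\phi_{00}(\varrho^m\lambda)=\tilde\kappa_\varrho\,\phi_{00}(\lambda)$, so that $F_\Dom(\lambda)^{-1}$ inherits a polyhomogeneous expansion. The trace expansion of $G_\Dom$ then follows because each $\kappa$-homogeneous component $g_{jk}$ satisfies $\Tr g_{jk}(\lambda)=|\lambda|^{-j/m-1}\Tr g_{jk}(\hat\lambda)$ by conjugation invariance of the trace. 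There is no contour-shift or residue calculus here, and your account of stationarity and of the origin of the logarithms would not, as stated, yield the expansion.
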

The meaning of being ``independent of $x$'' near the boundary is explained in Definition~\ref{AConstantCoeff} and  Remark~\ref{AandPhiConstantCoeff}.

The special case of the theorem when $\Dom=\Dom_{\min}=x^{m/2}H^m_b(M;E)$ was proved by Loya \cite{LoRes01}. A direct application of our theorem gives the complete short time asymptotics of the heat trace when $A_\Dom$ is sectorial, 
\begin{equation*}
\Tr(\varphi e^{-tA_{\Dom}}) \sim \sum_{j=0}^\infty a_j t^{\frac{j-n}{m}}+ \sum_{j=0}^\infty \sum_{k=0}^{m_j} a_{jk} t^{\frac{j}{m}}\log^{k} t \; \text{ as } t\to 0^+.
\end{equation*}
In the aforementioned special case, this expansion was obtained by Loya \cite{LoRes01} and by Gil \cite{GilThesis,GiHeat01}. Lesch \cite{Le97} obtained a complete expansion of the heat trace in the case of positive selfadjoint extensions with stationary domain assuming that the coefficients are independent of $x$ near the boundary of $M$. For a general positive selfadjoint extension Lesch, op.cit., obtained the partial expansion
\begin{equation*}
\Tr(e^{-tA}) \sim \sum_{j=0}^{n-1} a_j t^{\frac{j-n}{m}}+ O(\log t) \; \text{ as } t\to 0^+.
\end{equation*}

If $\Dom$ is nonstationary, not necessarily selfadjoint, the work presented in this paper gives the partial expansion
\begin{equation*}
\Tr \bigl(\varphi(A_{\Dom}-\lambda)^{-\ell}\bigr) \sim
\sum\limits_{j=0}^{n-1}\alpha_{j,0}\lambda^{\frac{n-j}{m}-\ell}
+\alpha_{n,1}\lambda^{-\ell}\log\lambda + O(|\lambda|^{-\ell})
\end{equation*}
as $|\lambda| \to \infty$. This implies
\begin{equation*}
\Tr(\varphi e^{-tA_{\Dom}}) \sim \sum_{j=0}^{n-1} a_j t^{\frac{j-n}{m}}+ 
a_{n,1}\log t + O(1)\; \text{ as } t\to 0^+
\end{equation*}
if $A_\Dom$ is sectorial. As mentioned before, a complete asymptotic expansion of the resolvent and the heat trace for nonstationary domains will be discussed in \cite{GKM5b}.

\medskip
Another consequence of Theorem~\ref{ResolventTraceExpansion1} is that, if $A$ is bounded from below on the minimal domain, then the $\zeta$-function of any selfadjoint extension with stationary domain (e.g. the Friedrichs extension) has a meromorphic extension to all of $\C$ with poles contained in the set
\begin{equation}\label{PolesLocation}
\set{\frac{n-k}{m}:k\in \N_0}.
\end{equation}

An example worked out by Falomir, Pisani, and Wipf \cite{FPW} (an ODE on a half-line) yielded a $\zeta$-function extending meromorphically to all of $\C$ but exhibiting additional poles at points not in the set \eqref{PolesLocation}. The problem was taken up again by Falomir, Muschietti, Pisani, and Seeley \cite{FMPSeeley}, this time analyzing a first order system of ODEs on an interval, then for partial differential operators of Laplace type (with coefficients independent of $x$) by Kirsten, Loya, and Park \cite{KLP06,KLP08a,KLP08b}; their work shows that there may be logarithmic singularities of the $\zeta$-function that impede its meromorphic extension to all of $\C$. Our results show that these pathologies arise only when the domain is nonstationary. 

\medskip
Conic metrics are typical examples of incomplete Riemannian metrics, so the corresponding Laplacians typically admit many selfadjoint extensions in $x^{\gamma} L^2_b$. Ever since Cheeger's seminal paper \cite{Ch79}, the predominant method used in the spectral theory of these extensions has been a separation of variables ansatz, together with a pasting argument away from the singularities. While Cheeger's results have been extended and generalized to various classes of operators, see e.g. \cite{BrSe85,BrSe87,BrSe91,Ca83, FMP04, FMPSeeley, Le97, Mooers}, the method of separation of variables (combined with sophisticated ODE techniques and special functions) has remained at the core of and has limited almost all subsequent investigations in the field. For instance, the heat kernel and $\zeta$-function of an arbitrary selfadjoint extension of $\Delta_{\textup{warped}}$, cf. Section~\ref{sec:Preliminaries}, are to this day not understood, except for the Friedrichs extension, see \cite{BrSe87, BrSe91, GiHeat01, Le97, LoRes01}.

In this paper we make use of, and extend, the analytic and geometric methods developed in \cite{GKM1,GKM2,GKM3}. Our results can be applied to selfadjoint and nonselfadjoint extensions of $\Delta_{\textup{warped}}$, and more generally, to extensions of $\Delta_{\cg}$ on functions as well as on \mbox{($c$-)forms}, see Section~\ref{sec:Preliminaries}. In fact, the statements of Theorem~\ref{ResolventTraceExpansion1} are valid particularly for the Friedrichs extension of any semibounded cone operator. 

\medskip
The structure of the paper is as follows. In the next section we will set up the terminology and will review some facts about cone operators, their symbols, and their closed extensions in a reference weighted $L^2$ space modeled by the underlying cone geometry. We will discuss specific examples of Laplacians associated with a cone metric and will use them to illustrate the notion of coefficients independent of $x$ near the boundary. We will also review the model operator, its scaling properties, the associated domain, and the concept of being stationary. 

In Section~\ref{sec:ResolventStructure} we review the structure of the resolvent and describe its various components according to the decomposition
\begin{equation*}
 (A_\Dom-\lambda)^{-1} = B(\lambda) + G_{\Dom}(\lambda)
\end{equation*}
given in Theorem~\ref{ResolventStructure}. In that section we also summarize the proof of our main result which is then carried out in the subsequent sections. In Section~\ref{sec:DminAsymptotics} we obtain the trace asymptotics associated with $B(\lambda)$, and in Section~\ref{sec:FiniteContribution} we discuss the asymptotic properties of $G_{\Dom}(\lambda)$. 

%%%%%%%%%%%%%%%%%%%%%%%%%%%%%%%%%%%%%%%%%%%%%%%%%%%%%%%%%%%%%%%%%%%%%
\section{Preliminaries}
\label{sec:Preliminaries}
%%%%%%%%%%%%%%%%%%%%%%%%%%%%%%%%%%%%%%%%%%%%%%%%%%%%%%%%%%%%%%%%%%%%%

\def\Gr{\mathrm{Gr}}

Cone operators arise when studying partial differential equations on manifolds with conical singularities. By introducing spherical coordinates centered at the singularities, the equations exhibit a typical degeneracy in the radial variable. Topologically, a manifold with conical singularities can be realized as a quotient $M/{\sim}$ of a smooth compact $n$-manifold $M$ with boundary $Y = \partial M$ with respect to an equivalence relation that collapses boundary components to points. Both geometric and analytic investigations associated with such a configuration actually take place on the manifold $M$.

\subsection*{Cone geometry}
The natural framework for cone geometry is the $c$-cotangent bundle
\begin{equation}\label{cCotangent}
\cpi:\cT^*M\to M,
\end{equation}
see~\cite{GKM1}, a vector bundle whose space of smooth sections is in one-to-one correspondence with that consisting of all smooth $1$-forms on $M$ that are conormal to $Y$, i.e., all $\omega \in C^{\infty}(M,T^*M)$ whose pullback to $Y$ vanishes. The isomorphism is given by a bundle homomorphism
\begin{equation}\label{TheHomomorphism}
\cev: \cT^*M \to T^*M
\end{equation}
which is an isomorphism over $\open M$. In coordinates $(x,y_1,\dotsc,y_{n-1})$ near the boundary, where $x$ is a defining function for $Y$, a local frame for $\cT^*M$ is given by the sections mapped by $\cev$ to the forms $dx$, $xdy_1, \dotsc, xdy_{n-1}$. As is customary in the context of analysis on manifolds with singularities, the boundary defining function $x$ is always assumed to be positive in $\open M$. 

By a \emph{$c$-metric} we mean any metric on the dual of $\cT^*M$. Such a metric induces (via the homomorphism \eqref{TheHomomorphism}) a Riemannian metric $\cg$ on $\open M$. In coordinates near the boundary as in the previous paragraph, $\cg$  is represented as a smooth symmetric $2$-cotensor
\begin{equation*}
\cg = g_{00}\,dx\otimes dx + \sum\limits_{j=1}^{n-1}g_{0j}\,dx\otimes xdy_j +
\sum\limits_{i=1}^{n-1}g_{i0}\,xdy_i\otimes dx + \sum\limits_{i,j=1}^{n-1}g_{ij}\,xdy_i\otimes xdy_j;
\end{equation*}
the matrix $(g_{ij})$ depends smoothly on $(x,y)$ and is positive definite up to $x=0$.

Special cases of $c$-metrics are warped and straight cone metrics. A warped cone metric is a Riemannian metric on $M$ such that there is a diffeomorphism of a neighborhood $U$ of $Y$ in $M$ to $[0,\eps) \times Y$ under which the metric takes on the form $dx^2 + x^2g_Y(x)$ for a family of metrics $g_Y(x)$ on $Y$ which is smooth up to $x=0$; here $x$ is of course the variable in $[0,\eps)$. If the diffeomorphism is such that $g_Y(x)$ is in fact independent of $x$ for small $\eps$, then $\cg$ is a straight cone metric. The study of spectral theory on manifolds equipped with straight cone metrics was initiated in Cheeger's paper \cite{Ch79}.

In coordinates near the boundary, the positive Laplacian on functions corresponding to a general $c$-metric $\cg$ takes the form
\begin{equation*}
\Delta_{\cg} = x^{-2}\sum\limits_{k+|\alpha| \leq 2}a_{k\alpha}(x,y)(xD_x)^kD_y^{\alpha}
\end{equation*}
with coefficients smooth up to $x=0$. In particular, for a straight cone metric, 
\begin{align*}
\Delta_{\textup{straight}} &= x^{-2}\Bigl((xD_x)^2 - i(n-2)(xD_x) + \Delta_{g_Y}\Bigr), 
\intertext{and for a warped cone metric,}
\Delta_{\textup{warped}} &= x^{-2}\Bigl((xD_x)^2 - a(x,y)(xD_x) + \Delta_{g_Y(x)}\Bigr),
\end{align*}
where $a(x,y) = i\bigl((n-2)+|g_Y(x)|^{-1/2}  x\partial_x |g_Y(x)|^{1/2}\bigr)$. Notice that, unlike these two special cases, the general Laplacian $\Delta_{\cg}$ may contain mixed derivatives with respect to $x$ and the variables $y_1,\dots,y_{n-1}$. 

\subsection*{Cone operators}
Let $E\to M$ be a smooth vector bundle. A cone differential operator of order $m$ on sections of $E$ is an element $A$ of $x^{-m}\Diff_b^m(M;E)$, where $\Diff_b^m(M;E)$ is the space of totally characteristic differential operators of order $m$, see \cite{RBM2}. Thus $A$ is a linear differential operator on $C^\infty(\open M;E)$, of order $m$, which near any point in $Y$, in coordinates as above, is of the form 
\begin{equation}\label{cone-operator}
  A=x^{-m}\sum_{k+|\alpha|\le m} a_{k\alpha}(x,y)(xD_x)^k D_y^\alpha 
\end{equation} 
with coefficients $a_{k\alpha}$ smooth up to $x=0$.

\begin{definition}\label{AConstantCoeff}
We say that $A$ has \emph{coefficients independent of $x$ near $Y$} if there is a diffeomorphism $\Phi$ of a neighborhood $U$ of $Y$ in $M$ to $Y\times [0,\eps)$ such that with the canonical projection $\pi: Y\times [0,\eps)\to Y$, with $x$ the variable on $[0,\eps)$, and with some isomorphism $E|_U\to \pi^*(E|_Y)$ covering $\Phi$ we have $A=x^{-m} \sum A_\ell (xD_x)^\ell$ where $A_\ell\in \Diff^{m-\ell}(Y;E|_Y)$.
\end{definition}

Having coefficients independent of $x$ means in effect that $A$ can be analyzed near the boundary using separation of variables. According to this definition, $\Delta_{\textup{straight}}$ has coefficients independent of $x$ near $Y$. In general, this is not the case for $\Delta_{\textup{warped}}$.

\begin{remark}\label{AandPhiConstantCoeff}
An element $\varphi\in C^\infty(M;\textup{End}(E))$ can be viewed as a differential operator of order $0$, so the definition above may be applied to such $\varphi$. In the last assertion of Theorem \ref{ResolventTraceExpansion1} we mean that $A$ and $\varphi$ have coefficients independent of $x$ near $Y$ with respect to the same trivialization. 
\end{remark}

The standard principal symbol of $A$ over the interior determines, with the aid of the map $\cev$ in \eqref{TheHomomorphism}, a smooth homomorphism $\cpi^*E\to\cpi^*E$. This is the $c$-principal symbol $\csym(A)$ of $A$. In local coordinates near $Y$, 
\begin{equation*}
  \csym(A)=\sum_{k+|\alpha|= m} a_{k\alpha}(x,y)\xi^k \eta^\alpha. 
\end{equation*} 
The operator $A$ is said to be $c$-elliptic if $\csym(A)$ is invertible on $\cT^*M\minus 0$.

Associated with $A=x^{-m}P$ there is an operator-valued family
\begin{equation*}
 \C\ni \sigma\mapsto \hat P(\sigma)\in \Diff^m(Y;E|_Y),
\end{equation*}
which in accordance to \eqref{cone-operator} can be represented as
\begin{equation*}
 \hat P(\sigma) = \sum_{k+|\alpha|\le m} a_{k\alpha}(0,y)\sigma^k D_y^\alpha
\end{equation*}
and is called the conormal symbol of $A$ (or of $P$). If $A$ is $c$-elliptic, then $\hat P(\sigma)$ is invertible for all $\sigma\in\C$ except a discrete set 
\begin{equation*}
 \spec_b(A)=\set{\sigma\in\C: \hat P(\sigma) \text{ is not invertible}}
\end{equation*}
called the boundary spectrum of $A$, see \cite{RBM2}.

\subsection*{Closed extensions and Fredholmness}

Let $E$ be a vector bundle over $M$. Then $\cL^2(M;E)$ is the space of $L^2$ sections of $E$ with respect to some Hermitian metric on $E$ and the Riemannian density determined by some $c$-metric on $M$. The space and its (locally convex) topology are independent of the choice of Hermitian metric and Riemannian density, but of course if selfadjointess is of interest then these metrics are fixed. 

The density induced by a $c$-metric is of the form $x^{n-1}\m$ for a smooth positive density $\m$ on $M$, where $n=\dim M$. In other words,
\begin{equation*}
 \cL^2(M;E) = x^{-n/2} L^2_b(M;E),
\end{equation*}
where $L^2_b(M;E)$ is the $L^2$ space with respect to the $b$-density $\frac{1}{x}\m$, see \cite{RBM2}.

The starting point of the analysis of $A$ is the densely defined unbounded operator
\begin{equation}\label{StartDomain}
A:C_c^\infty(\open M;E)\subset x^{\mu} L^2_b(M;E)\to x^{\mu} L^2_b(M;E)
\end{equation}
for some $\mu\in \R$. One may assume (as we will here) that $\mu=-m/2$, since this situation can be attained by replacing $A$ with $x^{-\mu-m/2} A\,x^{\mu+m/2}\in x^{-m}\Diff_b^m(M;E)$. This particular choice of weight has technical advantages when dealing with adjoints, see \cite{GiMe01}.

There are two canonical closed extensions of $A$: 
\begin{align*}
\Dom_{\min} &= \text{domain of the closure of \eqref{StartDomain}},\\
\Dom_{\max} &=\{u\in x^{-m/2}L^2_b(M;E): Au\in x^{-m/2}L^2_b(M;E)\}.
\end{align*}
These are complete with respect to the graph norm, $\|u\|_A=\|u\|+\|Au\|$, and the former is a subspace of the latter. Both domains are dense in $x^{-m/2}L^2_b(M;E)$. The following theorem implies that if $A$ is $c$-elliptic, then any intermediate space $\Dom$
\begin{equation*}
\Dom_{\min}\subset \Dom\subset \Dom_{\max},
\end{equation*}
gives rise to a closed extension 
\begin{equation*}
A_{\Dom}:\Dom\subset x^{-m/2}L^2_b(M;E)\to x^{-m/2}L^2_b(M;E).
\end{equation*}

\begin{theorem}[Lesch \cite{Le97}] \label{LeschTheorem}
If $A\in x^{-m}\Diff_b^m(M;E)$ is $c$-elliptic, then
\begin{equation*}
\dim \Dom_{\max}/\Dom_{\min}<\infty
\end{equation*}
and all closed extensions of $A$ are Fredholm.
Moreover, 
\begin{equation}\label{RelIndexA}
\Ind A_{\Dom}= \Ind A_{\Dom_{\min}}+ \dim\Dom/\Dom_{\min}. 
\end{equation}
\end{theorem}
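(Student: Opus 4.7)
The plan is to exploit the $c$-ellipticity of $A$ to construct a parametrix in a small cone (Mellin-$b$) calculus which simultaneously delivers Fredholmness on the minimal domain and the asymptotic structure of maximal-domain elements near $Y$; once $\dim\Dom_{\max}/\Dom_{\min}<\infty$ is established, Fredholmness of every intermediate extension and the relative index formula \eqref{RelIndexA} become purely algebraic.

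\smallskip

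Since $\csym(A)$ is invertible off the zero section, I would first build a parametrix $B \in x^m\Psi_b^{-m}(M;E)$ so that both $AB-I$ and $BA-I$ are smoothing on $x^{-m/2}L^2_b(M;E)$, and in particular compact. This gives Fredholmness of $A_{\Dom_{\min}}$ and identifies $\Dom_{\min} = x^{m/2}H^m_b(M;E)$. For $u\in\Dom_{\max}$ the decomposition $u = BAu + (I-BA)u$ exhibits $u$ modulo $\Dom_{\min}$ as the image of an element in $x^{-m/2}L^2_b$ under a smoothing remainder whose Schwartz kernel is controlled by the meromorphic inverse $\hat P(\sigma)^{-1}$ of the conormal symbol. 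A contour shift in the associated Mellin representation then yields, modulo $\Dom_{\min}$, an expansion
\[
u \sim \sum_{\sigma}\sum_{k} x^{i\sigma}\log^k\!x\,\varphi_{\sigma,k}(y),
\]
with $\sigma$ ranging over the finitely many points of $\spec_b(A)$ in the critical horizontal strip determined by the weight $-m/2$, and $\varphi_{\sigma,k}$ lying in the finite-dimensional generalized null-spaces of $\hat P$ at $\sigma$. This forces $\dim\Dom_{\max}/\Dom_{\min}<\infty$.

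\smallskip

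Every intermediate $A_{\Dom}$ is then a finite-rank enlargement of the Fredholm operator $A_{\Dom_{\min}}$ and is therefore itself Fredholm. For \eqref{RelIndexA}, write $\Dom=\Dom_{\min}\oplus V$ with $d=\dim V$; a direct verification produces the four-term exact sequence
\[
0 \to \ker A_{\Dom}/\ker A_{\Dom_{\min}} \to V \to \textup{coker}\,A_{\Dom_{\min}} \to \textup{coker}\,A_{\Dom} \to 0,
\]
whose alternating dimension count yields exactly $\Ind A_{\Dom}-\Ind A_{\Dom_{\min}}=d$. The principal obstacle is the Mellin-expansion step: it demands a careful parametrix construction in the cone calculus together with sharp control on the meromorphic structure of $\hat P(\sigma)^{-1}$ across a horizontal strip, so that one can transfer the finite pole data into a finite basis of representatives for $\Dom_{\max}/\Dom_{\min}$ near $Y$.
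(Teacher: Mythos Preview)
The paper does not give its own proof of this theorem; it is stated with attribution to Lesch \cite{Le97} and used as a foundational input. So there is no in-paper argument to compare against. Your sketch is, in outline, the standard route to Lesch's result and the relative index formula, and the four-term exact sequence argument for \eqref{RelIndexA} is clean and correct.

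Two technical cautions about the analytic part. First, a parametrix $B\in x^{m}\Psi_b^{-m}(M;E)$ that inverts only $\csym(A)$ does \emph{not} produce compact remainders on $x^{-m/2}L^2_b$: in the $b$- or cone calculus the residual terms are order $-\infty$ but need a genuine weight gain $x^{\varepsilon}$ at $Y$ to be compact. That gain comes only after one also inverts the conormal symbol $\hat P(\sigma)$ along the relevant horizontal line, so the Fredholmness of $A_{\Dom_{\min}}$ and the Mellin contour-shift step are really the \emph{same} analytic ingredient, not two separate ones. Second, the identification $\Dom_{\min}=x^{m/2}H^m_b(M;E)$ is not automatic; it holds when $\spec_b(A)$ avoids the line $\Im\sigma=m/2$, and in general $\Dom_{\min}$ is only the graph-norm closure of that space. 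Neither point invalidates your strategy, but both should be stated carefully in a full write-up.
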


More details about the structure of domains will be given below.

\subsection*{The model operator}
The model operator of an element $A\in x^{-m}\Diff_b^m(M;E)$ is an invariantly defined operator $A_\wedge$ on the half line bundle $\pi:N_+Y\to Y$, the closed inward normal bundle of $Y$, that in local coordinates takes the form  
\begin{equation*}
 A_\wedge=x^{-m}\sum_{k+|\alpha|\le m} a_{k\alpha}(0,y)(xD_x)^k D_y^\alpha,
\end{equation*}
if $A$ is given by \eqref{cone-operator}. The operator $A_\wedge$ is $c$-elliptic if $A$ is.

We trivialize $N_+Y$ as $Y^\wedge=[0,\infty)\times Y$ using the defining function $x$ and denote the variable in $[0,\infty)$ also by $x$. For simplicity we often write $E$ instead of $\pi^*(E|_Y)$. Let $L^2_b(Y^\wedge;E)$ be the $L^2$ space with respect to a density of the form $\frac{dx}{x}\otimes \pi^*\mathfrak{m}_Y$ and the canonically induced Hermitian form on $\pi^*(E|_Y)$; $\m_Y$ is any smooth positive density on $Y$.

The operator $A_\wedge\in x^{-m}\Diff_b^m(Y^\wedge;E)$ acts on $C^\infty_c(\open Y^\wedge;E)$. Just as with $A$ there are two canonical extensions to closed densely defined operators on the space $x^{-m/2}L^2_b(Y^\wedge;E)$; naturally, their domains are denoted $\Dom_{\wedge,\min}$ and $\Dom_{\wedge,\max}$. Like for $A$, the space $\Dom_{\wedge,\max}/ \Dom_{\wedge,\min}$ is finite dimensional, in fact there is an important natural isomorphism 
\begin{equation*}
\theta:\Dom_{\max}/ \Dom_{\min}\to \Dom_{\wedge,\max}/ \Dom_{\wedge,\min} 
\end{equation*}
discussed below.

The model operator exhibits a fundamental invariance property with respect to the natural $\R_+$-action on $Y^\wedge$. Let
\begin{equation*}
\R_+\ni \varrho \mapsto
\kappa_\varrho:x^{-m/2}L^2_b(Y^\wedge;E)\to x^{-m/2}L^2_b(Y^\wedge;E)
\end{equation*}
be the one-parameter group of isometries defined by 
\begin{equation}\label{kapparho}
  (\kappa_\varrho f)(x,y)= \varrho^{m/2} f(\varrho x,y).
\end{equation}
This definition, which appears to treat $f$ as a function, has an obvious interpretation for sections of $\pi^*(E|_Y)\to Y^\wedge$. Since $A_\wedge$ satisfies
\begin{equation}\label{kappaHomogeneous0}
  \kappa_\varrho A_\wedge =\varrho^{-m} A_\wedge\kappa_{\varrho},
\end{equation}
the canonical domains $\Dom_{\wedge,\min}$ and $\Dom_{\wedge,\max}$ are both $\kappa$-invariant. In particular, $\kappa$ induces an action on $\Dom_{\wedge,\max}/\Dom_{\wedge,\min}$, therefore an action on each of the Grassmannian manifolds associated with this quotient:
\begin{equation}\label{ActionOnGrassmannian}
\kappa:\R_+\times \Gr_{d''}(\Dom_{\wedge,\max}/\Dom_{\wedge,\min})\to \Gr_{d''}(\Dom_{\wedge,\max}/\Dom_{\wedge,\min}).
\end{equation}
Also from \eqref{kappaHomogeneous0} we get
\begin{equation}\label{kappaHomogeneous}
  A_\wedge-\varrho^m\lambda = \varrho^{m} \kappa_\varrho (A_\wedge-\lambda) 
  \kappa_{\varrho}^{-1} 
\end{equation}
for every $\varrho>0$ and $\lambda\in\C$. This property is called $\kappa$-homogeneity,  see e.g. \cite{SchulzeNH}.

As with $A$, any intermediate space $\Dom_\wedge$ with $\Dom_{\wedge,\min}\subset \Dom_\wedge\subset \Dom_{\wedge,\max}$ gives rise to a closed extension
\begin{equation*}
A_{\wedge,\Dom_\wedge}:\Dom_\wedge\subset x^{-m/2}L^2_b(Y^\wedge;E)
\to x^{-m/2}L^2_b(Y^\wedge;E).
\end{equation*}
We define the background spectrum and the background resolvent set of $A_\wedge$ as
\begin{gather*}
 \bgspec A_\wedge = \!\! \bigcap_{\Dom_{\wedge,\min}\subset \Dom_\wedge\subset \Dom_{\wedge,\max}} \!\!\spec A_{\wedge,\Dom_\wedge}, \\
 \bgres A_\wedge = \C\minus \bgspec A_\wedge.
\end{gather*}
Clearly $\bgspec A_\wedge$ is closed, and using the $\kappa$-homogeneity one obtains easily that $\bgres A_\wedge$ is a union of open sectors. Furthermore, if $\lambda\in \bgres A_\wedge$, then $A_{\wedge,\Dom_\wedge} -\lambda$ is Fredholm and
\begin{equation*}
 \Ind(A_{\wedge,\Dom_\wedge}-\lambda)=
 \Ind(A_{\wedge,\min}-\lambda)+\dim\Dom_\wedge/\Dom_{\wedge,\min},
\end{equation*}
see \cite[Section~7]{GKM1}. The index is constant on connected components of $\bgres A_\wedge$.

\subsection*{Stationary domains and the isomorphism $\boldsymbol{\theta}$}
Let
\begin{equation*}
\Sigma=\spec_b(A)\cap \{\sigma\in\C: -m/2<\Im\sigma<m/2\}.
\end{equation*}
For every $\sigma_0\in \Sigma$ we let $\Sing_{\wedge,\sigma_0}$ be the space of singular functions of the form 
\begin{equation*}
 \psi=x^{i\sigma_0}\sum_{k=0}^{\mu_{\sigma_0}} c_{\sigma_0,k}(y)\log^k x 
 \;\text{ with } c_{\sigma_0,k}\in C^\infty(Y;E)
\end{equation*}
such that $A_\wedge \psi=0$. Since $A$ is assumed to be $c$-elliptic, $\Sing_{\wedge,\sigma_0}$ is finite dimensional. Also,
\begin{equation*}
 \Dom_{\wedge,\max}/\Dom_{\wedge,\min} \cong \bigoplus_{\sigma_0\in\Sigma} \Sing_{\wedge,\sigma_0}\subset C^\infty(\open Y^\wedge ;E)
\end{equation*}
with the isomorphism given by the map
\begin{equation}\label{wedgequotientIso}
 \psi\mapsto (\omega \psi+\Dom_{\wedge,\min}): \bigoplus_{\sigma_0\in\Sigma} \Sing_{\wedge,\sigma_0} \to \Dom_{\wedge,\max}/\Dom_{\wedge,\min}
\end{equation}
for an arbitrary cut-off function $\omega\in C_c^\infty([0,1))$ (a function which equals $1$ in a neighborhood of the origin).

There are analogous spaces $\Sing_{\sigma_0}\subset C^\infty(\open Y^\wedge;E)$ and isomorphism
\begin{equation}\label{quotientIso}
\Dom_{\max}/\Dom_{\min} \to \bigoplus_{\sigma_0\in\Sigma} \Sing_{\sigma_0} \subset C^\infty(\open Y^\wedge ;E)
\end{equation}
associated with $A$. The space $\Sing_{\sigma_0}$ is canonically isomorphic to $\Sing_{\wedge,\sigma_0}$ and consist of the functions of the form
\begin{equation*}
\sum_{\vartheta=0}^{m} \Bigl(\sum_{k=0}^{K_{\sigma_0-i\vartheta}} c_{\sigma_0-i\vartheta,k}(y) \log^kx\Bigr)x^{i(\sigma_0-i\vartheta)}\;\text{ with } c_{\sigma_0-i\vartheta,k}\in C^\infty(Y;E).
\end{equation*}
The space and the isomorphism are defined as follows. First, the operator $A$ has a Taylor expansion near $Y$,
\begin{equation*}
 A \sim x^{-m} \sum_{\nu=0}^\infty P_\nu x^\nu,
\end{equation*}
where each $P_k\in \Diff^m_b(Y^\wedge;E)$ has coefficients independent of $x$. Implicit in this expansion is the fact that one made a choice of trivialization $\pi:[0,\eps)\times Y\to Y$ of a collar neighborhood of $Y$ and an isomorphism $E\cong\pi^*(E|_Y)$ near $Y$. The following makes use of these trivializations. Let $\hat P_k(\sigma)$ be the conormal symbol (indicial family) associated with $P_k$.   
Define linear operators
\begin{equation}\label{eSigmak}
 \e_{\sigma_0,\vartheta}:\Sing_{\wedge,\sigma_0}\to C^\infty(\open Y^\wedge;E),\quad \vartheta=0,1,2,\dotsc
\end{equation}
inductively as follows:
\begin{enumerate}[($i$)]
\item $\e_{\sigma_0,0}$ is the identity map. 
\item Given $\e_{\sigma_0,0},\dotsc,\e_{\sigma_0,\vartheta-1}$ for some $\vartheta \in \N$, define $\e_{\sigma_0,\vartheta}(\psi)$ for $\psi \in \Sing_{\wedge,\sigma_0}$ to be the unique function of the form
\begin{equation*}
\Bigl(\sum_{k=0}^{m_{\sigma_0-i\vartheta}}c_{\sigma_0-i\vartheta,k}(y)\log^kx\Bigr) x^{i(\sigma_0-i\vartheta)}
\end{equation*}
such that
\begin{equation*}
\qquad
(\omega \e_{\sigma_0,\vartheta}(\psi))^{\wedge}(\sigma) + 
\hat{P}_0(\sigma)^{-1}\Bigl(\sum_{k=1}^{\vartheta}\hat{P}_k(\sigma)
\s_{\sigma_0-i\vartheta}(\omega \e_{\sigma_0,\vartheta-k}(\psi))^{\wedge}
(\sigma+ik)\Bigr)
\end{equation*}
is holomorphic at $\sigma = \sigma_0 - i\vartheta$, where 
\begin{equation*}
(\omega \e_{\sigma_0,\vartheta-k}(\psi))^{\wedge}(\sigma)
= \int_0^\infty x^{-i\sigma} \omega \e_{\sigma_0,\vartheta-k}(\psi) \frac{dx}{x}
\end{equation*}
is the Mellin transform of $\omega \e_{\sigma_0,\vartheta-k}(\psi)$, and $\s_{\sigma_0-i\vartheta}(\omega \e_{\sigma_0,\vartheta-k}(\psi))^{\wedge} (\sigma+ik)$ is the singular part of its Laurent expansion at $\sigma_0 - 
i\vartheta$. Here, $\omega \in C_c^{\infty}(\overline{\R}_+)$ is an arbitrary 
cut-off function near zero.
Observe that the Mellin transform of $\omega \e_{\sigma_0,\vartheta-k}(\psi)$ 
is meromorphic in $\C$ with pole only at $\sigma_0 - i(\vartheta-k)$.
\end{enumerate}
The map
\begin{equation*}
\sum_{\vartheta=0}^m \e_{\sigma_0,\vartheta}:\Sing_{\wedge,\sigma_0}\to C^\infty(\open Y^\wedge;E)
\end{equation*}
is injective; letting $\Sing_{\sigma_0}$ be its image we get an isomorphism 
\begin{equation*}
\theta^{-1}_{\sigma_0}:\Sing_{\wedge,\sigma_0}\to \Sing_{\sigma_0}
\end{equation*}
The maps $\theta_{\sigma_0}^{-1}$ together with the isomorphisms \eqref{wedgequotientIso} and \eqref{quotientIso} determine an isomorphism
\begin{equation*}
 \theta: \Dom_{\max}/\Dom_{\min}\to \Dom_{\wedge,\max}/\Dom_{\wedge,\min}
\end{equation*}
which in turn establishes a natural correspondence between subspaces of the domain and the range.

\begin{definition}\label{def:DwedgeStationary}
Let $\Dom$ be a domain for $A$. 
\begin{enumerate}[$(i)$]
\item The domain $\Dom_{\wedge}$ for $A_\wedge$ defined via
\begin{equation*}
\Dom_{\wedge}/\Dom_{\wedge,\min}= \theta\big(\Dom/\Dom_{\min}\big)
\end{equation*}
is called the \emph{associated domain} of $\Dom$, see \cite{GKM1,GKM2}.
\item The domain $\Dom$ is said to be \emph{stationary} if its associated domain is $\kappa$-invariant.
\end{enumerate}
\end{definition}

It is a quite common assumption in the existing literature that the domain $\Dom$ for $A$ is invariant under the apparently natural action of $\kappa$ on the elements of $\Sing = \bigoplus_{\sigma_0\in \Sigma} \Sing_{\sigma_0}$ (via the isomorphism \eqref{quotientIso}). However, this is an assumption that makes sense only when $A$ has coefficients independent of $x$ near $Y$. In \cite[Section~5]{GKM3} we gave a very simple example for which $\Dom_{\max}$ (or $\Sing$) is not $\kappa$-invariant, which means that allowing $\kappa$ to act directly on the domain of $A$ on the manifold $M$ is conceptually incorrect (this is so even when discussing general closed extensions of $\Delta_{\textup{warped}}$). The map $\theta$ resolves the issue by expressing the condition in terms of domains for $A_\wedge$, for which scaling is indeed natural.

Stationary domains other than $\Dom_{\min}$ and $\Dom_{\max}$ (if these spaces are different) do exist in general. For instance, by the results in \cite{GiMe01}, it is easy to see that the Friedrichs extension of any semibounded cone operator is stationary. This applies in particular to arbitrary $c$-Laplacians $\Delta_{\cg}$.

More generally, let $d=\dim(\Dom_{\wedge,\max}/\Dom_{\wedge,\min})$ and assume $d>1$. Let $0<d''<d$, and let $\mathcal G=\Gr_{d''}(\Dom_{\wedge,\max}/\Dom_{\wedge,\min})$. The infinitesimal generator of the action \eqref{ActionOnGrassmannian} is a real (and real-analytic) vector field $\mathcal T$ (see \cite{GKM1}). Since $\dim \mathcal G \ne 0$, the Euler characteristic of $\mathcal G$ is not zero, so $\mathcal T$ must vanish somewhere. Those points in $\mathcal G$ where $\mathcal T$ vanishes correspond in an obvious manner to $\kappa$-invariant domains $\Dom_{\wedge,\min}\subset \Dom_\wedge\subset \Dom_{\wedge,\max}$.

%%%%%%%%%%%%%%%%%%%%%%%%%%%%%%%%%%%%%%%%%%%%%%%%%%%%%%%%%%%%%%%%%%%%%
\section{Structure and expansion of the resolvent}
\label{sec:ResolventStructure}
%%%%%%%%%%%%%%%%%%%%%%%%%%%%%%%%%%%%%%%%%%%%%%%%%%%%%%%%%%%%%%%%%%%%%

A cut-off function $\omega \in C_c^{\infty}([0,1))$ is a function which equals $1$ in a neighborhood of the origin. We will consider such a function as a function on both $M$ and $Y^\wedge$, supported in a collar neighborhood of the boundary $Y=\partial M$. We will use the notation $\phi \prec \psi$ to indicate that the function $\psi$ equals $1$ in a neighborhood of the support of the function $\phi$; in particular, $\phi\psi =\vp$.

Let $\Lambda$ be a closed sector in $\C$ of the form
\begin{equation*}
\Lambda=\set{\lambda\in\C: \lambda=re^{i\theta}\text{ for } r\ge 0, \ |\theta-\theta_0|\le a}
\end{equation*}
for some real $\theta_0$ and $a>0$. For $R>0$ we denote
\begin{equation*}
 \Lambda_R = \set{\lambda\in\Lambda: |\lambda|\ge R}
\end{equation*}

In the following theorem, proved in \cite[Section 6]{GKM2}, and in all subsequent sections here, we assume that $A\in x^{-m}\Diff_b^m(M;E)$, $m>0$, is $c$-elliptic with parameter in $\Lambda$, that is,
\begin{equation*}
\csym(A)-\lambda \;\text{ is invertible on } \big(\cT^*M\times\Lambda\big)\minus 0. 
\end{equation*}
We consider closed extensions $A_\Dom$ of $A$ in $x^{-m/2}L^2_b(M;E)$ such that $\Lambda$ is a sector of minimal growth for $A_{\wedge,\Dom_\wedge}$, where $\Dom_\wedge$ is the associated domain of $\Dom$. 

Note that, as shown in \cite{GKM1}, if $\Dom$ is stationary, then $\Lambda$ is a sector of minimal growth for $A_{\wedge,\Dom_\wedge}$ if and only if
\begin{equation*}
\Lambda\minus\{0\}\subset \bgres(A_\wedge) \text{ and } A_{\wedge,\Dom_{\wedge}}-\lambda_0 \text{ is invertible for some } \lambda_0\in \Lambda\minus\{0\}.
\end{equation*}

\begin{theorem}\label{ResolventStructure}
Under the previous assumptions on the symbols of $A$, we have that $\Lambda$ is a sector of minimal growth for the extension $A_{\Dom}$, and there is $R>0$ such that 
\begin{equation*}
 (A_\Dom-\lambda)^{-1} = B(\lambda) + G_{\Dom}(\lambda)
 \;\text{ for every } \lambda\in\Lambda_R.
\end{equation*}
Here $B(\lambda)$ is a certain parametrix of $A-\lambda$ with $B(\lambda)(A-\lambda)|_{\Dom_{\min}}=1$ for $\lambda\in\Lambda_R$, and $G_{\Dom}(\lambda)$ is a smoothing pseudodifferential operator of finite rank. 
\end{theorem}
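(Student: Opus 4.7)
The plan is to build $B(\lambda)$ as a parametrix of $A-\lambda$ that is independent of the choice of extension $\Dom$ and acts as an exact left inverse on $\Dom_{\min}$, and then to show that the remaining discrepancy, $G_\Dom(\lambda) = (A_\Dom-\lambda)^{-1}-B(\lambda)$, is smoothing of finite rank because it captures precisely the finite-dimensional quotient $\Dom/\Dom_{\min}$.

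First I would construct $B(\lambda)$ in a parameter-dependent cone pseudodifferential calculus. Away from $Y$, parameter $c$-ellipticity of $\csym(A)-\lambda$ yields a standard parameter-dependent parametrix. Near $Y$, I would use a Mellin quantization of the inverse of the conormal symbol family, exploiting the $\kappa$-homogeneity of $A_\wedge$ to absorb the spectral parameter into a rescaling of $x$. The Mellin weight line is chosen so that the resulting integral operator maps into $x^{m/2}H^m_b\subset \Dom_{\min}$; poles of $\hat P(\sigma)^{-1}$ inside the relevant strip are handled by an iterative correction that absorbs the corresponding residues. Patching the interior and boundary pieces with cutoffs $\omega \prec \omega'$ produces $B(\lambda)$ satisfying $B(\lambda)(A-\lambda)|_{\Dom_{\min}}=1$ and $(A-\lambda)B(\lambda)=1-K(\lambda)$ with $K(\lambda)$ smoothing of order $O(|\lambda|^{-\infty})$ in appropriate norms.

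Next I would establish that $\Lambda$ is a sector of minimal growth for $A_\Dom$ and identify the correction term. Transplanting the minimal-growth resolvent $(A_{\wedge,\Dom_\wedge}-\lambda)^{-1}$ to $M$ via $\kappa_\varrho$ with $\varrho=|\lambda|^{1/m}$, combined with the interior parametrix, provides an approximate right inverse whose error tends to zero in norm; together with Fredholmness from Theorem~\ref{LeschTheorem}, Neumann series then give the exact inverse $(A_\Dom-\lambda)^{-1}$ for $\lambda\in\Lambda_R$ with $\|(A_\Dom-\lambda)^{-1}\|=O(|\lambda|^{-1})$. Setting $G_\Dom(\lambda)=(A_\Dom-\lambda)^{-1}-B(\lambda)$ and writing $u=(A_\Dom-\lambda)^{-1}v\in\Dom$, the left-inverse identity gives $G_\Dom(\lambda)v = u - B(\lambda)(A-\lambda)u$, and since $B(\lambda)(A-\lambda)$ is the identity on $\Dom_{\min}$, this expression depends only on the class of $u$ in the finite-dimensional quotient $\Dom/\Dom_{\min}$, identified via $\theta$ with $\Dom_\wedge/\Dom_{\wedge,\min}$. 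The range of $G_\Dom(\lambda)$ is therefore contained in a fixed finite-dimensional subspace of smooth sections, yielding both smoothing and finite rank.

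The main obstacle is the exact left-inverse construction of $B(\lambda)$: arranging the Mellin-quantized parametrix so that its range actually sits inside $\Dom_{\min}$ and so that the left-inverse identity holds without error requires a careful tracking of the boundary spectrum $\spec_b(A)$ as the Mellin weight line is positioned, together with a coherent iterative scheme to cancel the contributions of poles in the critical strip without disturbing the $O(|\lambda|^{-1})$ decay. A secondary technical point is ensuring that the range of $G_\Dom(\lambda)$ sits in a $\lambda$-independent finite-dimensional subspace, which is exactly what the isomorphism $\theta$ is designed to provide.
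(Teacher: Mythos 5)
Your plan follows essentially the same route as the paper (whose detailed proof is deferred to \cite{GKM2}): $B(\lambda)$ is built exactly as in \eqref{DminParametrix} from an interior parameter-dependent parametrix and a Mellin-quantized boundary piece arranged to be an exact left inverse on $\Dom_{\min}$, and $G_\Dom(\lambda)=(A_\Dom-\lambda)^{-1}-B(\lambda)$ is smoothing and of finite rank because $1-B(\lambda)(A-\lambda)$ kills $\Dom_{\min}$ and hence factors through the finite-dimensional quotient $\Dom/\Dom_{\min}$ --- precisely the content of \eqref{GreenRemainder}. The one slip is your claim that the range of $G_\Dom(\lambda)$ sits in a \emph{fixed} $\lambda$-independent subspace: the map $[1-B(\lambda)(A-\lambda)]$ in \eqref{Plambda} depends on $\lambda$, so the range moves with $\lambda$; but finite rank only needs its dimension to be bounded by $\dim\Dom/\Dom_{\min}$, which your factorization already provides.
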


The parametrix $B(\lambda)$ is of the form (cf. \cite[Section~5]{GKM2})
\begin{equation}\label{DminParametrix}
 B(\lambda)= \tilde\omega Q(\lambda)\tilde\omega_1 + (1-\tilde\omega)Q_{\rm int}(\lambda) (1-\tilde\omega_0)+ G(\lambda)
\end{equation}
for some cut-functions $\tilde\omega$, $\tilde\omega_0$, $\tilde\omega_1\in C^\infty_0([0,1))$ with $\tilde\omega_0\prec \tilde\omega\prec \tilde\omega_1$, where: 
\begin{enumerate}[$(i)$]
 \item $G(\lambda)$ is a smooth family of smoothing operators that, together with its derivatives, admits an asymptotic expansion (as $|\lambda|\to\infty$) similar to the expansion in Lemma~\ref{BSymbolExpansion}; 
 \item $Q_{\rm int}(\lambda)$ is a standard parameter-dependent parametrix of $A-\lambda$ over $\open M$;
 \item $Q(\lambda)$ is a Mellin operator defined by
\begin{equation}\label{Bminlambda}
 Q(\lambda)u(x)=\frac{1}{2{\pi}}\int\limits_{\R}
\int\limits_{(0,1)}\Bigl(\frac{x}{x'}\Bigr)^{i\sigma-\frac{m}{2}} x^m h(x,\sigma,x^m\lambda) u(x')\,\frac{dx'}{x'}\,d\sigma,
\end{equation}
for $u\in C_c^\infty((0,1),C^\infty(Y;E))$, where $h$ is an operator-valued parameter-dependent Mellin symbol of order $-m$. What we need to know here is that in a local patch $\Omega\subset Y$, the family $h(x,\sigma,\lambda)$ can be expressed by a symbol $p(x,y,\eta,\lambda)$ with $(x,y)\in \overline{\R}_+\times \Omega$, $\eta=(\sigma,\xi)\in \R\times\R^{n-1}$, and $\lambda\in\Lambda$, that admits an asymptotic expansion 
\[ p\sim \sum_{k=0}^\infty p_k \]
such that for $|\eta|+|\lambda|^{1/m}\ge 1$,
\begin{equation*}
 p_k(x,y,t\eta,t^m\lambda)=t^{-m-k} p_k(x,y,\eta,\lambda)
 \text{ for every } t\ge 1.
\end{equation*}
\end{enumerate}

The construction of $G_\Dom(\lambda)$ follows a ``reduction to the boundary'' approach that we proceed to describe briefly. Under our general assumptions, there is an operator family $K(\lambda):\C^d\to x^{-m/2}L^2_b$, with $d=-\Ind A_{\Dom_{\min}}$, such that
\begin{equation*}
\begin{pmatrix} (A-\lambda)|_{\Dom_{\min}} & K(\lambda) \end{pmatrix}:
\begin{array}{c} \Dom_{\min}\\ \oplus\\ \C^d \end{array} \to x^{-m/2}L^2_b
\end{equation*}
is invertible for $\lambda\in\Lambda_R$ for some $R>0$. The inverse can be written as
\begin{equation*}
 \begin{pmatrix} (A-\lambda)|_{\Dom_{\min}} & K(\lambda) \end{pmatrix}^{-1}= \binom{B(\lambda)}{T(\lambda)},
\end{equation*}
where $B(\lambda)$ is the parametrix of $A-\lambda$ on $\Dom_{\min}$, and $T(\lambda):x^{-m/2}L^2_b\to \C^d$ has the properties listed in Proposition~\ref{Tlambda}.
If we split $\Dom=\Dom_{\min}\oplus \tilde\Sing$ and write 
\begin{equation*}
 A_\Dom-\lambda =
 \begin{pmatrix} (A-\lambda)|_{\Dom_{\min}} & (A-\lambda)|_{\tilde\Sing} \end{pmatrix},
\end{equation*}
then
\begin{equation*}
\binom{B(\lambda)}{T(\lambda)}
\begin{pmatrix} (A-\lambda)|_{\Dom_{\min}} & (A-\lambda)|_{\tilde\Sing} \end{pmatrix}=
\begin{pmatrix}
 B(\lambda)(A-\lambda)|_{\Dom_{\min}} & B(\lambda)(A-\lambda)|_{\tilde\Sing} \\
 0 & T(\lambda)(A-\lambda)|_{\tilde\Sing}
\end{pmatrix},
\end{equation*}
so $A_\Dom-\lambda$ is invertible if and only if $T(\lambda)(A-\lambda): \tilde\Sing\to \C^d$ is invertible. 

By construction, $T(\lambda)(A-\lambda):\Dom_{\max}(A)\to \C^d$ vanishes on $\Dom_{\min}(A)$, thus it induces an operator on the quotient:
\begin{equation}\label{Flambda}
F(\lambda)=[T(\lambda)(A-\lambda)]: \Dom_{\max}/\Dom_{\min}\to \C^d.
\end{equation}
We denote 
\begin{equation*}
F_\Dom(\lambda)=F(\lambda)|_{\Dom/\Dom_{\min}}. 
\end{equation*}
Since $\tilde\Sing$ is isomorphic to $\Dom/\Dom_{\min}$, we conclude that $A_\Dom-\lambda$ is invertible if and only if $F_\Dom(\lambda)$ is invertible. The main properties of $F(\lambda)$ are described in Proposition~\ref{FStructure}.

On the other hand, since $B(\lambda)(A-\lambda)$ is the identity on $\Dom_{\min}$ for $\lambda\in \Lambda_R$, the operator $1-B(\lambda)(A-\lambda)$ vanishes on $\Dom_{\min}$, and induces a map 
\begin{equation}\label{Plambda}
 \big[1-B(\lambda)(A-\lambda)\big]: \Dom_{\max}/\Dom_{\min}\to x^{-m/2}L^2_b
\end{equation}
whose properties are discussed in Proposition~\ref{PStructure}. 

Finally, with the above components, the family $G_\Dom(\lambda)$ can be written as
\begin{equation}\label{GreenRemainder} 
  G_{\Dom}(\lambda) = \big[1-B(\lambda)(A-\lambda)\big]F_\Dom(\lambda)^{-1}T(\lambda).
\end{equation}

\bigskip
The proof of Theorem~\ref{ResolventStructure} relies on an analysis of associated operator families on the model cone $Y^\wedge$. These objects on $Y^\wedge$ are called wedge symbols. For instance, $A_\wedge$ is the wedge symbol of $A$. The wedge symbols of $T(\lambda)$ and $F(\lambda)$ are given by
\begin{equation*}
 T_\wedge(\lambda)=t_0(\lambda) \;\text{ and }\; F_\wedge(\lambda)= t_0(\lambda)(A_\wedge-\lambda),
\end{equation*}
where $t_0(\lambda)$ is the principal component in the expansion \eqref{TAsympExp}. 

As mentioned before, the family $G(\lambda)$ in \eqref{DminParametrix} has an asymptotic expansion in $\lambda$ similar to the one in Lemma~\ref{BSymbolExpansion}; let $G_\wedge(\lambda)$ be the principal component of that expansion. If we replace $h(x,\sigma,x^m\lambda)$ by $h(0,\sigma,x^m\lambda)$ in \eqref{Bminlambda} and denote the new operator by $Q_0(\lambda)$, then the operator family defined by
\begin{equation*}
 B_\wedge(\lambda)=Q_0(\lambda) + G_\wedge(\lambda)
\end{equation*}
is the wedge symbol of $B(\lambda)$.

Since the wedge symbols $T_\wedge(\lambda)$, $F_\wedge(\lambda)$, and $B_\wedge(\lambda)$ are related to $A_{\wedge,\Dom_\wedge}-\lambda$ in the same way how $T(\lambda)$, $F(\lambda)$, and $B(\lambda)$ are related to $A_\Dom-\lambda$, we see that $A_{\wedge,\Dom_\wedge}-\lambda$ is invertible if and only if $F_\wedge(\lambda):\Dom_{\wedge}/\Dom_{\wedge,\min}\to \C^d$ is invertible. 

\bigskip
Our main asymptotics result is the following. The details of the proof will be worked out in the next two sections.

\begin{theorem}\label{ResolventTraceExpansion}
Let $A$ and $\Lambda$ be as in Theorem~\ref{ResolventStructure}, and let $\Dom$ be stationary.
If $\ell\in\N$ is such that $m\ell>n$, then $(A_{\Dom}-\lambda)^{-\ell}$ is a smooth family of trace class operators in $x^{-m/2}L^2_b(M;E)$, and for any $\varphi \in C^{\infty}(M;\textup{End}(E))$, we have
\begin{equation*}
\Tr \bigl(\varphi(A_{\Dom}-\lambda)^{-\ell}\bigr) \sim
\sum\limits_{j=0}^{\infty}\sum\limits_{k=0}^{m_j}
\alpha_{jk}(\hat\lambda)|\lambda|^{\frac{n-j}{m}-\ell}\log^k|\lambda|
\; \text{ as } |\lambda| \to \infty,
\end{equation*}
where $\hat\lambda= \lambda/|\lambda|$ and $\alpha_{jk} \in C^{\infty}(S^1\cap\Lambda)$. The numbers $m_j$ vanish for $j < n$, and $m_n \leq 1$. In general, the $\alpha_{jk}$ depend on $\varphi$, $A$, $\Dom$, and $\ell$, but the coefficients $\alpha_{jk}$ for $j < n$ and $\alpha_{n,1}$ do not depend on $\Dom$. If both $A$ and $\varphi$ have coefficients independent of $x$ near $\partial M$, then $m_j = 0$ for all $j>n$.
\end{theorem}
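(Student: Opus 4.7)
The plan is to start from the decomposition of Theorem~\ref{ResolventStructure} and differentiate $\ell-1$ times in $\lambda$, using
\begin{equation*}
(A_\Dom-\lambda)^{-\ell} = \frac{1}{(\ell-1)!}\partial_\lambda^{\ell-1}(A_\Dom-\lambda)^{-1}
= \frac{1}{(\ell-1)!}\bigl[\partial_\lambda^{\ell-1}B(\lambda)+\partial_\lambda^{\ell-1}G_\Dom(\lambda)\bigr].
\end{equation*}
Provided $m\ell>n$ each summand is trace class after multiplication by $\varphi$, and the trace splits as
\begin{equation*}
\Tr\bigl(\varphi(A_\Dom-\lambda)^{-\ell}\bigr) = \frac{1}{(\ell-1)!}\Tr\bigl(\varphi\,\partial_\lambda^{\ell-1}B(\lambda)\bigr) + \frac{1}{(\ell-1)!}\Tr\bigl(\varphi\,\partial_\lambda^{\ell-1}G_\Dom(\lambda)\bigr).
\end{equation*}
Since $B(\lambda)$ depends only on $A$ and the minimal extension, all $\Dom$\nobreakdash-dependence lives in the second summand; the two pieces are what Sections~\ref{sec:DminAsymptotics} and~\ref{sec:FiniteContribution} are designed to handle.

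For the $B$-summand I would exploit the structure \eqref{DminParametrix}. The interior parametrix $Q_{\rm int}(\lambda)(1-\tilde\omega_0)$ contributes a standard Seeley-type expansion in powers $\lambda^{(n-j)/m-\ell}$ without logarithms, by computing local trace densities from the expansion of $p(x,y,\eta,\lambda)\sim\sum p_k$ and integrating in $\eta$. The Mellin operator $\tilde\omega Q(\lambda)\tilde\omega_1$ carries the boundary contribution: substituting the homogeneity of $p_k$ into \eqref{Bminlambda}, performing the $\eta$-integration, and then integrating in $x$ against the measure $\frac{dx}{x}$ after pushing the power $x^m$ produces a Mellin integral whose poles are dictated by the indicial structure of $A$. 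Taking residues gives coefficients $\alpha_{j,0}$ for $j<n$ and, at the critical index $j=n$, a double pole producing the universal logarithmic term $\alpha_{n,1}\log\lambda$; the remainder $G(\lambda)$ is absorbed via its own asymptotic expansion. If $A$ and $\varphi$ are $x$-independent near $Y$, the Mellin integrands have only simple poles beyond $j=n$, so no further logarithms arise.

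For the $G_\Dom$-summand I would use the factorization \eqref{GreenRemainder} and pass to the wedge symbols $T_\wedge$, $F_\wedge$, $B_\wedge$ on $Y^\wedge$. Stationarity of $\Dom$ means $\Dom_\wedge$ is $\kappa$-invariant, so from the $\kappa$-homogeneity \eqref{kappaHomogeneous} the wedge family $F_{\wedge,\Dom_\wedge}(\lambda)$ satisfies an exact scaling law under $\kappa_\varrho$ with $\lambda\mapsto\varrho^m\lambda$; its inverse is therefore $\kappa$-homogeneous of the opposite degree, and one obtains the anchoring identity $F_{\wedge,\Dom_\wedge}(\lambda)^{-1}=|\lambda|^{-1}\kappa_{|\lambda|^{1/m}}F_{\wedge,\Dom_\wedge}(\hat\lambda)^{-1}\kappa_{|\lambda|^{1/m}}^{-1}$ (up to the natural identification of the quotient with $\Sing_\wedge$), where $\hat\lambda=\lambda/|\lambda|$. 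Combining this with full asymptotic expansions of $F_\Dom(\lambda)-F_{\wedge,\Dom_\wedge}(\lambda)$, $T(\lambda)-T_\wedge(\lambda)$, and $[1-B(\lambda)(A-\lambda)]-[1-B_\wedge(\lambda)(A_\wedge-\lambda)]$ in descending powers of $|\lambda|^{1/m}$, one expands $F_\Dom(\lambda)^{-1}$ via a Neumann series whose terms are explicit compositions involving $F_{\wedge,\Dom_\wedge}(\hat\lambda)^{-1}$ and $\kappa$-scalings. Taking $\partial_\lambda^{\ell-1}$, composing, pairing against $\varphi$, and evaluating the trace on the finite-dimensional range $\tilde\Sing$ yields an expansion of the form $\sum \beta_{jk}(\hat\lambda)|\lambda|^{(n-j)/m-\ell}\log^k|\lambda|$ with only $j\ge n$ appearing, because $G_\Dom(\lambda)$ lands in $\tilde\Sing$ and the leading $\kappa$-scaling gives the power $|\lambda|^{-\ell}$ already at $j=n$.

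The main obstacle will be the last step: verifying that $F_\Dom(\lambda)^{-1}$ admits a genuine asymptotic expansion whose homogeneous components can be read off from the $\kappa$-homogeneity of the stationary wedge domain, and controlling the logarithmic orders $m_j$. This is exactly where stationarity is used in an essential way; without it, $F_{\wedge,\Dom_\wedge}(\lambda)^{-1}$ would depend on $\lambda$ in a manner incompatible with a single $\kappa$-homogeneous Ansatz and one would only obtain the partial expansion quoted after the statement of Theorem~\ref{ResolventTraceExpansion1}. Smoothness of $\alpha_{jk}$ in $\hat\lambda$ follows from the smoothness of all symbols and of $F_{\wedge,\Dom_\wedge}(\hat\lambda)^{-1}$ on $S^1\cap\Lambda$; the bound $m_n\le 1$ and the vanishing of $m_j$ for $j<n$ are inherited from the Mellin/residue analysis of $B(\lambda)$ above, since $G_\Dom(\lambda)$ contributes only at orders $j\ge n$. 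Finally, under $x$-independence the wedge symbol equals the operator itself near $Y$, eliminating the $x$-Taylor correction terms that generate extra logarithms and forcing $m_j=0$ for $j>n$.
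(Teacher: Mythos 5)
Your proposal follows essentially the same route as the paper: the same reduction to $\Tr\bigl(\varphi\,\partial_\lambda^{\ell-1}B(\lambda)\bigr)$ plus $\Tr\bigl(\varphi\,\partial_\lambda^{\ell-1}G_\Dom(\lambda)\bigr)$, the same treatment of the boundary part of $B(\lambda)$ (a $\kappa$-homogeneous piece supported where $x\lesssim|\lambda|^{-1/m}$ plus a Mellin-quantized piece whose $x$-integral produces a logarithm exactly when the homogeneity degrees coincide, first at $j=n$ and, after Taylor-expanding the coefficients in $x$, possibly at each $j>n$), and the same use of stationarity to anchor a Neumann-series inversion of $F_\Dom(\lambda)$ at the $\tilde\kappa$-homogeneous leading term $f_{00}(\lambda)|_{\Dom/\Dom_{\min}}$. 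The one slip is your ``anchoring identity'': since $F_\wedge(\varrho^m\lambda)=F_\wedge(\lambda)\kappa_\varrho^{-1}$ (the factor $\varrho^{-m}$ from $t_0$ cancels the factor $\varrho^{m}$ from $A_\wedge-\lambda$), the inverse on a $\kappa$-invariant $\Dom_\wedge$ satisfies $F_{\wedge,\Dom_\wedge}(\lambda)^{-1}=\kappa_{|\lambda|^{1/m}}F_{\wedge,\Dom_\wedge}(\hat\lambda)^{-1}$ with no prefactor $|\lambda|^{-1}$ and with $\kappa$ acting on one side only; the power $|\lambda|^{-1}$ in the leading term of $\Tr\bigl(\varphi G_\Dom(\lambda)\bigr)$ comes from $T(\lambda)$ instead, so your final power count (leading order $|\lambda|^{-\ell}$, i.e.\ $j=n$, with no logarithm there from the finite-rank part) is unaffected.
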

\begin{remark}
The above asymptotic expansion is indeed equivalent to the expansion stated in Theorem~\ref{ResolventTraceExpansion1} due to the analyticity of the components which follows from the analyticity of the resolvent.
\end{remark}
\begin{proof}
For $\ell\in\N$ we have
\begin{align*}
\bigl(A_{\Dom}-\lambda\bigr)^{-\ell} 
 &= \frac{1}{(\ell-1)!}\partial_{\lambda}^{\,\ell-1}(A_{\Dom}-\lambda)^{-1} \\
 &=\frac{1}{(\ell-1)!}\Bigl(\partial_{\lambda}^{\,\ell-1} B(\lambda) 
 + \partial_{\lambda}^{\,\ell-1} G_{\Dom}(\lambda)\Bigr)
\end{align*}
with $B(\lambda)$ and $G_{\Dom}(\lambda)$ as in Theorem~\ref{ResolventStructure}.
Thus the statements of the theorem follow from Theorem~\ref{BCompleteExpansion} and
Corollary~\ref{TraceGStructure}.
\end{proof}

\begin{remark}
With the same arguments as for the stationary case, if $\Dom$ is nonstationary, we still obtain the partial expansion
\begin{equation*}
\Tr \bigl(\varphi(A_{\Dom}-\lambda)^{-\ell}\bigr) \sim
\sum\limits_{j=0}^{n-1} \alpha_{j}\lambda^{\frac{n-j}{m}-\ell}
+\alpha_{n}\lambda^{-\ell}\log\lambda+ O(|\lambda|^{-\ell})
\text{ as $|\lambda| \to \infty$.}
\end{equation*}
As mentioned in the introduction, the full expansion for the general case (discussed in \cite{GKM5b}) is more involved and requires a deeper understanding of $G_\Dom(\lambda)$. 
\end{remark}

%%%%%%%%%%%%%%%%%%%%%%%%%%%%%%%%%%%%%%%%%%%%%%%%%%%%%%%%%%%%%%%%%%%%%
\section{Asymptotic expansion of the $\Dom_{\min}$ contribution}
\label{sec:DminAsymptotics}
%%%%%%%%%%%%%%%%%%%%%%%%%%%%%%%%%%%%%%%%%%%%%%%%%%%%%%%%%%%%%%%%%%%%%

We start by introducing certain weighted Sobolev spaces over $M$ and $Y^\wedge$ (defined by means of the natural $L^2$ spaces introduced in Section~\ref{sec:Preliminaries}) on which the operators $A$ and $A_\wedge$ act continuously.

For a nonnegative integer $s$ we define
\begin{equation*}
 H^s_b(M;E)=\{u\in L^2_b(M;E): Pu\in L^2_b(M;E)\; \forall P\in\Diff_b^s(M;E)\}.
\end{equation*}
As usual, for a general $s\in\R$, the spaces are defined by interpolation and duality. For $\alpha\ge\beta$ and $s\ge t$, we have $x^\alpha H^s_b(M;E)\embed x^\beta H^t_b(M;E)$. If $\alpha>\beta$, this embedding is compact when $s>t$ and trace class when $s>t+n$. 

We let $H_{\rm cone}^s({Y^\wedge};E)$ be the space consisting of distributions $u$ such that given any coordinate patch $\Omega\subset Y$ diffeomorphic to an open subset of the sphere $S^{n-1}$, and given any pair of nonnegative functions $\phi \in C_c^\infty(\Omega)$ and $\omega\in C_c^\infty(\R)$ with $\omega(r)=1$ near $r=0$, we have $(1-\omega)\phi\,u \in H^s(\R^n;E)$ where $\R_+ \times S^{n-1}$ is identified with $\R^n \minus \{0\}$ via polar coordinates.

For $s,\alpha,\delta\in\R$ we define
\begin{equation*}
 \K^{s,\alpha}_\delta(Y^\wedge;E) = \omega x^{\alpha}H^s_b(Y^\wedge;E) + (1-\omega)x^{\frac{n-m}{2}-\delta} H_{\rm cone}^s({Y^\wedge};E)
\end{equation*}
for any cut-off function $\omega$. If $\delta=0$, we will omit it from the notation.
Note that 
\begin{equation*}
H_{\rm cone}^0(Y^\wedge;E) = x^{-n/2}L^2_b(Y^\wedge;E) \;\text{ and }\;
\K^{0,-m/2}(Y^\wedge;E)=x^{-m/2}L^2_b(Y^\wedge;E).
\end{equation*}
For $\alpha\ge \beta$, $\delta\ge \gamma$, and $s\ge t$, we have $\K^{s,\alpha}_\delta(Y^\wedge;E) \embed \K^{t,\beta}_{\gamma}(Y^\wedge;E)$. If $\alpha>\beta$, this embedding is compact when $s>t$ and $\delta>\gamma$, and it is trace class when $s>t+n$ and $\delta>\gamma+n$.

\bigskip
We proceed with some lemmas about the asymptotic properties of the parametrix $B(\lambda)$ in \eqref{DminParametrix}. Their proofs rely on the construction of $B(\lambda)$ combined with standard arguments from Schulze's edge theory.

\begin{lemma}\label{RapDecreasingTerms}
Let $\vp\in C^\infty(M;\textup{End}(E))$ and let $\omega_0,\,\omega_1\in C_c^\infty([0,1))$ be cut-off functions such that $\omega_0\prec \omega_1$. Then 
\begin{equation*}
 \omega_0 \vp B(\lambda)(1-\omega_1) \;\text{ and }\; (1-\omega_1)\vp B(\lambda)\omega_0
\end{equation*}
are both elements of $\S(\Lambda,\ell^1(x^{-m/2}H_b^s,x^{-m/2}H_b^t))$ for every $s,t\in\R$.
\end{lemma}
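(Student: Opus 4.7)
The plan is to use the parametrix decomposition \eqref{DminParametrix} and estimate each of the three pieces of $B(\lambda)$ separately. The key feature of the hypotheses is that $\omega_0\prec\omega_1$ forces $\operatorname{supp}\omega_0$ and $\operatorname{supp}(1-\omega_1)$ to be disjoint: one can fix $0<\delta_0<\delta_1<1$ with $\operatorname{supp}\omega_0\subset[0,\delta_0]$ and $\operatorname{supp}(1-\omega_1)\subset[\delta_1,1)$. This positive separation of supports is the mechanism that turns each piece of $B(\lambda)$ into an off-diagonal, Schwartz-valued, trace class family.

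For the smoothing remainder $G(\lambda)$: its asymptotic expansion is of the type in Lemma~\ref{BSymbolExpansion}, whose leading terms are concentrated near the boundary with Mellin-type structure. Pre- and post-multiplication by the cut-offs $\omega_0\varphi$ and $1-\omega_1$ with disjoint supports annihilates those leading-order contributions, and the resulting composition is smooth, trace class between any pair of weighted Sobolev spaces, and Schwartz in $\lambda$; this is a standard edge-calculus argument. For the interior piece $(1-\tilde\omega)Q_{\rm int}(\lambda)(1-\tilde\omega_0)$, $Q_{\rm int}(\lambda)$ is a standard parameter-dependent pseudodifferential parametrix over $\open M$; its pseudolocality combined with parameter-ellipticity yields that, whenever composed with cut-offs whose supports are separated, the kernel is smooth and Schwartz in $\lambda$, and the separation of $\operatorname{supp}\omega_0$ and $\operatorname{supp}(1-\omega_1)$ supplies exactly this off-diagonal condition.

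The technical heart of the argument is the Mellin contribution $\tilde\omega Q(\lambda)\tilde\omega_1$. By \eqref{Bminlambda} and the symbol description in item $(iii)$, the kernel of $\omega_0\varphi\,\tilde\omega Q(\lambda)\tilde\omega_1(1-\omega_1)$ in local coordinates near $Y$ is an oscillatory integral with phase $(x/x')^{i\sigma}e^{i(y-y')\cdot\xi}$ and amplitude $x^m p(x,y,\sigma,\xi,x^m\lambda)$. On the support of $\omega_0(x)(1-\omega_1(x'))$ one has $\log(x'/x)\ge\log(\delta_1/\delta_0)>0$, uniformly bounded below. The plan is to integrate by parts repeatedly in $\sigma$ via $(x/x')^{i\sigma}=[i\log(x/x')]^{-1}\partial_\sigma(x/x')^{i\sigma}$, shifting arbitrarily many derivatives onto $p$; each step gains a uniformly bounded factor $1/|\log(x/x')|$ together with a symbolic improvement $|\partial_\sigma^N p|\lesssim(1+|\sigma|+|\xi|+x|\lambda|^{1/m})^{-m-N}$. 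Combined with the prefactor $x^m$ and the scaling in $x^m\lambda$, and using additional integration by parts in $\xi$ to produce spatial smoothness, this yields a kernel that is smooth, rapidly decreasing as $x,x'\to 0^+$, and Schwartz in $\lambda$.

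The main obstacle I expect is converting these symbol-level estimates into the required $\S(\Lambda,\ell^1(x^{-m/2}H_b^s,x^{-m/2}H_b^t))$ bounds for arbitrary $s,t\in\R$: one has to balance the smoothing gained from $\xi$-integration by parts against both Sobolev orders, track the weights $x^{-m/2}$ on both sides, and verify Schwartz decay of all $\lambda$-derivatives of the trace norm uniformly in $\hat\lambda\in\Lambda\cap S^1$. Once the argument is carried out for $\omega_0\varphi B(\lambda)(1-\omega_1)$, the companion operator $(1-\omega_1)\varphi B(\lambda)\omega_0$ is treated symmetrically, exchanging the roles of the two cut-offs and using the same estimates.
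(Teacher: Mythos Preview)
The paper does not actually prove this lemma; it states Lemmas~\ref{RapDecreasingTerms} and~\ref{BSymbolExpansion} and then remarks that ``their proofs rely on the construction of $B(\lambda)$ combined with standard arguments from Schulze's edge theory.'' Your proposal is precisely an unpacking of what those standard arguments are: split $B(\lambda)$ according to \eqref{DminParametrix}, use pseudolocality of the interior parametrix, the Green structure of $G(\lambda)$, and---for the Mellin piece---integration by parts in $\sigma$ exploiting that $|\log(x/x')|$ is bounded below on $\operatorname{supp}\omega_0\times\operatorname{supp}(1-\omega_1)$. This is the correct approach and matches the paper's intent.

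One minor caution: your treatment of the Green term $G(\lambda)$ is a bit heuristic. It is not literally true that disjoint cut-offs ``annihilate the leading-order contributions'' of a Green symbol; rather, the rapid $\lambda$-decay of $\omega_0\varphi G(\lambda)(1-\omega_1)$ comes from the specific mapping properties built into Green operators in the edge calculus (they map into spaces with infinite flatness at $x=0$ and rapid decay at $x=\infty$, and the scaling $\kappa_\varrho$ interacts with the fixed cut-off $(1-\omega_1)$ to produce the Schwartz decay). Since both you and the paper defer this to ``standard edge-calculus'' reasoning, there is no real gap, but if you were to write it out in full you would need to invoke the Green-symbol estimates rather than a cancellation argument.
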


\begin{lemma}\label{BSymbolExpansion}
Let $\vp\in C^\infty(M;\textup{End}(E))$ and let $\omega,\omega_1\in C_c^\infty([0,1))$ be arbitrary cut-off functions. For $\ell\in\N$, the family $\mathcal Q(\lambda)=\vp \big(\partial_\lambda^{\ell-1}B(\lambda)\big)\omega_1(x|\lambda|^{1/m})$ has the following properties:
\begin{enumerate}[$(i)$]
\item For every $s\in\R$ and $R\gg 1$, $(1-\omega)\mathcal Q(\lambda) \in \S(\Lambda_R,\ell^1(x^{-m/2}H_b^s))$ and $\omega \mathcal Q(\lambda) \in C^\infty(\Lambda, \L(\K^{s,-m/2},\K^{s+m\ell,-m/2+\eps}_\delta))$ for all $\delta\in\R$ and some $\eps>0$; \end{enumerate}
in addition, with $q(\lambda)=\omega \mathcal Q(\lambda)$, 
\begin{enumerate}[$(i)$]
\stepcounter{enumi}
\item for every $\alpha,\beta\in\N_0$ we have
\begin{equation}\label{qSymbolEstimate}
\norm{\kappa_{|\lambda|^{1/m}}^{-1} \big(\partial_\lambda^\alpha \partial_{\bar\lambda}^\beta\, q(\lambda)\big)\kappa_{|\lambda|^{1/m}}}
= O(|\lambda|^{\frac{\mu}{m}-\alpha-\beta}) \;\text{ as }\; |\lambda|\to\infty,
\end{equation}
with $\mu=-m\ell$;
\item there are $q_j\in C^\infty(\Lambda\minus\{0\}, \L(\K^{s,-m/2},\K^{s+m\ell,-m/2+\eps}_\delta))$, $j\in\N_0$, with
\begin{equation*}
 q_j(\varrho^m\lambda)=\varrho^{-m\ell-j}
 \kappa_\varrho\, q_j(\lambda)\kappa_\varrho^{-1} \;\text{ for every } \varrho>0,
\end{equation*} 
such that for every $N\in \N$, the difference
\begin{equation*}
 q(\lambda)- \sum_{j=0}^{N-1} q_j(\lambda)
\end{equation*}
satisfies \eqref{qSymbolEstimate} with $\mu=-m\ell-N$. The leading term of the expansion is given by $q_0(\lambda)=\vp_0 \big(\partial_\lambda^{\ell-1} B_\wedge(\lambda)\big)\omega_1(x|\lambda|^{1/m})$, where $B_\wedge(\lambda)$ is the wedge symbol of $B(\lambda)$ and $\vp_0=\pi_+^*(\vp|_Y)$ with $\pi_+:Y^\wedge\to Y$.
\end{enumerate}
\end{lemma}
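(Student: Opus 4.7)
The plan is to examine each piece of the parametrix $B(\lambda)$ from \eqref{DminParametrix} separately, and to see how the cut-off $\omega_1(x|\lambda|^{1/m})$ on the right and the cut-off $\omega$ (resp.\ $1-\omega$) on the left slice each piece into a main part that fits the operator-valued $\kappa$-symbol class and a Schwartz tail. The piece $(1-\tilde\omega)Q_{\rm int}(\lambda)(1-\tilde\omega_0)$ is an honest parameter-dependent pseudodifferential parametrix supported away from $Y$, while $\omega_1(x|\lambda|^{1/m})$ concentrates on $\{x\le C|\lambda|^{-1/m}\}$. For $|\lambda|$ large the two supports are disjoint, so standard oscillatory-integral estimates provide $\ell^1(x^{-m/2}H_b^s)$ rapid decay; this contribution lives entirely in the $(1-\omega)\mathcal Q(\lambda)$ piece of $(i)$. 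The cross-support contributions of the form $(1-\omega)\vp\tilde\omega Q(\lambda)\tilde\omega_1\omega_1(x|\lambda|^{1/m})$ and $\omega\vp(1-\tilde\omega)Q_{\rm int}\omega_1(x|\lambda|^{1/m})$ are treated by Lemma~\ref{RapDecreasingTerms}, since in each case a cut-off supported near $Y$ meets one supported away from $Y$. Finally, the family $G(\lambda)$ in \eqref{DminParametrix} is, by its defining property in Section~\ref{sec:ResolventStructure}, itself of the form we are trying to describe, so it simply feeds into the expansion in $(iii)$ with principal part $G_\wedge(\lambda)$.

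The main analytic work lies in the Mellin piece $\tilde\omega Q(\lambda)\tilde\omega_1$. Using \eqref{Bminlambda} and the fact that $h$ depends on $\lambda$ only through $x^m\lambda$, the change of variables $x\mapsto x/|\lambda|^{1/m}$ (i.e.\ conjugation by $\kappa_{|\lambda|^{1/m}}$) transforms $x^m\lambda$ into $x^m\hat\lambda$ with $\hat\lambda=\lambda/|\lambda|$ of bounded modulus. After this rescaling the operator is a standard Mellin pseudodifferential operator of order $-m$ on $Y^\wedge$ depending smoothly on $\hat\lambda$, which (together with $\partial_\lambda^{\ell-1}$, each derivative producing a gain of $|\lambda|^{-1}$ and a factor of order $-m$) delivers the symbol estimate \eqref{qSymbolEstimate} with $\mu=-m\ell$. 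The mapping property $\K^{s,-m/2}\to\K^{s+m\ell,-m/2+\eps}_\delta$ in $(i)$ comes from the same calculation: shifting the Mellin contour past the first pole of $\hat P_0(\sigma)^{-1}$ below the line $\Im\sigma=-m/2$ provides the small weight gain $\eps>0$, while the cone-Sobolev gain by $m\ell$ in regularity is simply the order of $\partial_\lambda^{\ell-1}B(\lambda)$ as a Mellin operator.

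For $(iii)$ I would combine two asymptotic expansions: the Taylor expansion in $x$ of the Mellin symbol,
\begin{equation*}
h(x,\sigma,x^m\lambda)\sim\sum_{\nu=0}^{\infty}\frac{x^{\nu}}{\nu!}
(\partial_x^{\nu}h)(0,\sigma,x^m\lambda),
\end{equation*}
and the parameter-dependent homogeneous expansion $p\sim\sum p_k$ of $h$. Each resulting summand is $\kappa$-homogeneous of the required degree because the $p_k$ are quasi-homogeneous under the joint scaling $(\eta,\lambda)\mapsto(t\eta,t^m\lambda)$, and the factor $x^{\nu}$ combined with conjugation by $\kappa_\varrho$ produces exactly the degree shift $-m\ell-\nu-k$ needed. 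The principal term $\nu=k=0$ reads off as $\vp_0(\partial_\lambda^{\ell-1}Q_0(\lambda))\omega_1(x|\lambda|^{1/m})$, to which the principal part of $G(\lambda)$ adjoins $G_\wedge(\lambda)$, yielding $q_0(\lambda)=\vp_0(\partial_\lambda^{\ell-1}B_\wedge(\lambda))\omega_1(x|\lambda|^{1/m})$ as claimed.

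The main obstacle I expect is the bookkeeping required to show that after subtracting the first $N$ homogeneous components, the remainder genuinely satisfies \eqref{qSymbolEstimate} with $\mu=-m\ell-N$ in the operator-valued sense, uniformly in $\hat\lambda$. This requires controlling the interaction between the $x$-Taylor expansion, the $p_k$-expansion, the weight/smoothness shifts in the cone-Sobolev scales $\K^{s,\alpha}_\delta$, and the $\kappa$-conjugation; the argument is in principle routine within Schulze's edge calculus, but making it work with the specific twisted $\kappa$-action \eqref{kapparho} and the weight $-m/2$ is the technical heart of the lemma.
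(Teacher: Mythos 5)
The paper never writes out a proof of this lemma: the authors state only that the proofs of Lemmas~\ref{RapDecreasingTerms} and \ref{BSymbolExpansion} ``rely on the construction of $B(\lambda)$ combined with standard arguments from Schulze's edge theory,'' deferring to \cite[Section~5]{GKM2}. So there is no detailed argument to compare against; your decomposition of $B(\lambda)$ according to \eqref{DminParametrix}, the disposal of the interior and cross-support terms (note that $(1-\tilde\omega_0)\omega_1(x|\lambda|^{1/m})$ is in fact identically zero for large $|\lambda|$, so the $Q_{\rm int}$ piece vanishes rather than merely decaying), and the rescaling of the Mellin piece by $\kappa_{|\lambda|^{1/m}}$ followed by Taylor expansion of $h$ at $x=0$ is exactly the route the authors take when they do carry out the analogous computation inside the proof of Theorem~\ref{BCompleteExpansion}.

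One point in your sketch of $(iii)$ is misattributed and would not work as written. You claim the terms of the expansion are $\kappa$-homogeneous ``because the $p_k$ are quasi-homogeneous under $(\eta,\lambda)\mapsto(t\eta,t^m\lambda)$.'' But that quasi-homogeneity holds only for $|\eta|+|\lambda|^{1/m}\ge 1$ and $t\ge 1$, so the operators built from the individual $p_k$ are \emph{not} exactly $\kappa$-homogeneous families in the sense required, namely $q_j(\varrho^m\lambda)=\varrho^{-m\ell-j}\kappa_\varrho q_j(\lambda)\kappa_\varrho^{-1}$ for \emph{every} $\varrho>0$. The exact operator homogeneity comes solely from the Taylor expansion of $h$ in its first slot: a Mellin family with symbol $x^{m\ell+j}h_j(\sigma,x^m\lambda)$, $h_j$ independent of $x$ in the first argument, satisfies the homogeneity identity exactly because conjugation by $\kappa_\varrho$ replaces $x$ by $\varrho x$ everywhere while leaving $\sigma$ fixed, and the covariant cut-off $\omega_1(x|\lambda|^{1/m})$ transforms consistently. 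The homogeneous components $p_k$ of the local symbol are needed only later, for the pointwise kernel/trace expansion in Theorem~\ref{BCompleteExpansion}, and should be kept out of the operator-level statement here. Relatedly, you should make explicit that the fixed cut-offs $\tilde\omega$, $\tilde\omega_1$, $\omega$ and the $x$-dependent part of $\vp$ do not commute with $\kappa_\varrho$; they must be replaced by $1$ (resp.\ by the Taylor coefficients $\vp_\nu x^\nu$, each of which is exactly homogeneous) modulo errors that are rapidly decreasing by off-diagonal decay of the Mellin kernel against the shrinking support of $\omega_1(x|\lambda|^{1/m})$. With these corrections your plan is sound.
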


\begin{theorem}\label{BCompleteExpansion}
 Let $\vp\in C^\infty(M;\textup{End}(E))$ and let $B(\lambda)$ be as in \eqref{DminParametrix}. If $m\ell>n$, then $\vp\partial_{\lambda}^{\,\ell-1} B(\lambda)$ is of trace class in $x^{-m/2}L_b^2$, and 
\begin{equation*}
\Tr \big(\vp\partial_{\lambda}^{\,\ell-1} B(\lambda)\big)
\sim \sum_{j=0}^{\infty}\sum_{k=0}^{m_j}
\beta_{jk}(\hat\lambda)|\lambda|^{\frac{n-j}{m}-\ell}\log^k|\lambda|
\; \text{ as } |\lambda| \to \infty,
\end{equation*}
where $\hat\lambda= \lambda/|\lambda|$ and $\beta_{jk}\in C^\infty(S^1\cap\Lambda)$. Here $m_j = 0$ for $j < n$, and $m_j \leq 1$ for all $j\ge n$. If $A$ and $\varphi$ have coefficients independent of $x$ near $\partial M$, then
\begin{equation*}
\Tr \big(\vp\partial_{\lambda}^{\,\ell-1} B(\lambda)\big)
\sim \sum_{j=0}^{\infty}
\beta_{j,0}(\hat\lambda)|\lambda|^{\frac{n-j}{m}-\ell}
+ \beta_{n,1}(\hat\lambda)|\lambda|^{-\ell} \log|\lambda|.
\end{equation*}
\end{theorem}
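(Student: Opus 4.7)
The plan is to decompose $B(\lambda)$ according to \eqref{DminParametrix} into an interior parametrix $B_{\mathrm{int}}(\lambda)=(1-\tilde\omega)Q_{\mathrm{int}}(\lambda)(1-\tilde\omega_0)$, a Mellin boundary piece $B_{\mathrm{bdry}}(\lambda)=\tilde\omega Q(\lambda)\tilde\omega_1$, and a smoothing remainder $G(\lambda)$, and to analyze each summand separately after multiplication by $\vp\,\partial_\lambda^{\ell-1}$. Cross terms that mix a boundary cut-off with an interior cut-off are rapidly decreasing in $\lambda$ by Lemma~\ref{RapDecreasingTerms}, hence contribute nothing to the asymptotics.

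For the interior contribution $\vp\,\partial_\lambda^{\ell-1} B_{\mathrm{int}}(\lambda)$, I invoke the standard parameter-dependent pseudodifferential calculus on $\open M$. Under the assumption $m\ell>n$, Sobolev embedding ensures the trace class property, and the classical structure of the symbol of $Q_{\mathrm{int}}(\lambda)$, together with the rescaling $\xi\mapsto|\lambda|^{1/m}\xi$ on the diagonal of the Schwartz kernel, yields
\begin{equation*}
\Tr\bigl(\vp\,\partial_\lambda^{\ell-1} B_{\mathrm{int}}(\lambda)\bigr)\sim\sum_{j=0}^{\infty}\gamma_j(\hat\lambda)|\lambda|^{\frac{n-j}{m}-\ell}
\end{equation*}
without logarithmic terms, the coefficients $\gamma_j$ being obtained by the usual integration of the homogeneous components of the symbol over fibers of $T^*\open M$.

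For the remaining contribution I apply Lemma~\ref{BSymbolExpansion} with a cut-off $\omega\prec\tilde\omega$. Part~(i) of that lemma disposes of the $(1-\omega)$-localized piece, which is Schwartz in $\lambda$. The principal piece $q(\lambda)=\omega\mathcal{Q}(\lambda)$ admits the $\kappa$-homogeneous asymptotic expansion $q(\lambda)\sim\sum_{j\ge 0} q_j(\lambda)$ of part~(iii) with $q_j(\varrho^m\lambda)=\varrho^{-m\ell-j}\kappa_\varrho q_j(\lambda)\kappa_\varrho^{-1}$. Each $q_j(\lambda)$ is trace class: by part~(iii) it maps $\K^{s,-m/2}$ continuously into $\K^{s+m\ell,-m/2+\eps}_\delta$ for arbitrary $\delta$, which composed with the embedding back into $\K^{s,-m/2}$ is trace class whenever $m\ell>n$ and $\delta>n$. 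Since $\kappa_\varrho$ is unitary on $x^{-m/2}L^2_b(Y^\wedge;E)$, cyclicity of the trace gives
\begin{equation*}
\Tr q_j(\lambda)=|\lambda|^{-\ell-\frac{j}{m}}\,\Tr q_j(\hat\lambda),\qquad \Tr q_j(\hat\lambda)\in C^\infty(S^1\cap\Lambda).
\end{equation*}
After reindexing $j'=n+j$ these contribute at the indices $j'\ge n$ in the form $|\lambda|^{(n-j')/m-\ell}$, which is consistent with $m_j=0$ for $j<n$. The remainder $R_N(\lambda)=q(\lambda)-\sum_{j<N}q_j(\lambda)$ satisfies \eqref{qSymbolEstimate} with $\mu=-m\ell-N$; factoring through the $\K$-spaces as above converts the operator bound into a trace-norm bound $O(|\lambda|^{-\ell-N/m})$, which closes the asymptotic expansion.

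The main obstacle will be accounting for the single logarithmic term allowed at the critical index $j=n$ (the claim $m_n\le 1$) and, in the case of coefficients independent of $x$ near $Y$, showing the absence of log terms at higher indices. The logarithm at $j=n$ arises from the diagonal integration of the Mellin kernel defining $Q(\lambda)$: after the substitution $z=x|\lambda|^{1/m}$, the integrand against the leading Taylor coefficient in $x$ of the symbol $h(x,\sigma,\mu)$ produces, through the $\sigma$-contour integration, a residue at a pole of the indicial family $\hat P_0(\sigma)^{-1}$ that manifests as a $\log|\lambda|$ factor; the scaling relation for $p_0$ bounds the log power by one. When $A$ and $\vp$ have coefficients independent of $x$ near $Y$, the Mellin symbol depends only on $(\sigma,\mu)$, the Taylor expansion in $x$ terminates at zeroth order, and the exact $\kappa$-homogeneity on $Y^\wedge$ forces all higher terms to be pure powers of $|\lambda|$, so $m_j=0$ for $j>n$ while $m_n\le 1$ persists only through the critical residue.
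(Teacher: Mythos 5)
Your overall decomposition (interior parametrix, Mellin boundary piece, smoothing remainder, with cross terms killed by Lemma~\ref{RapDecreasingTerms}), your treatment of the interior term, and your use of the $\kappa$-homogeneous expansion of Lemma~\ref{BSymbolExpansion} with $\Tr q_j(\lambda)=|\lambda|^{-\ell-j/m}\Tr q_j(\hat\lambda)$ all match the paper. But there is a genuine gap in the middle: Lemma~\ref{BSymbolExpansion} concerns $\mathcal Q(\lambda)=\vp\bigl(\partial_\lambda^{\ell-1}B(\lambda)\bigr)\omega_1(x|\lambda|^{1/m})$, where the right-hand cut-off is \emph{scaled by} $|\lambda|^{1/m}$, so $\omega\mathcal Q(\lambda)$ only captures the region $x\lesssim|\lambda|^{-1/m}$. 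The boundary contribution $\omega\,\vp\,\partial_\lambda^{\ell-1}B(\lambda)\,\omega_1$ with a \emph{fixed} cut-off $\omega_1$ therefore leaves over the transition piece $\omega P(\lambda)\bigl(1-\omega(x|\lambda|^{1/m})\bigr)\omega_1$ supported in $|\lambda|^{-1/m}\lesssim x\lesssim 1$. This piece is not rapidly decreasing --- it already contributes at the leading order $|\lambda|^{n/m-\ell}$ --- and it is the \emph{only} source of the logarithmic terms. Your proposal either omits it or tacitly identifies the fixed and scaled cut-offs; either way the claims $m_j\le 1$ for $j\ge n$ and the refinement for coefficients independent of $x$ are not established.

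The mechanism you sketch for the log term is also not the right one in this framework: no contour integration or residue of $\hat P_0(\sigma)^{-1}$ enters. What happens instead is that one Taylor-expands the Mellin symbol $h^{(\ell)}(x,\sigma,\lambda)$ at $x=0$, writes the trace of the $j$-th Taylor term as
\begin{equation*}
\tau_j(\lambda)=\int_Y\int_0^\infty x^{m\ell+j}\,\omega(x)\bigl(1-\omega(x\varrho)\bigr)\,k_j(y,x^m\lambda)\,\frac{dx}{x}\,dy,\qquad \varrho=|\lambda|^{1/m},
\end{equation*}
and expands $k_j$ into homogeneous components of degree $-m\ell-k$. After rescaling, the radial integral reduces to $\int_{1/\varrho}^\infty x^{\,j-k+n}\,\omega(x)\bigl(1-\omega(x\varrho)\bigr)\,\frac{dx}{x}$, which is $c_1+c_2\varrho^{-j+k-n}$ unless $k=j+n$, in which case it equals $\log\varrho=\frac1m\log|\lambda|$. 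Thus each Taylor order $j$ produces at most one $\log$, attached to the power $|\lambda|^{-j/m-\ell}=|\lambda|^{(n-(n+j))/m-\ell}$, giving $m_{j'}\le 1$ for $j'\ge n$ and $m_{j'}=0$ for $j'<n$; and when $A$ and $\vp$ have coefficients independent of $x$ the Taylor expansion is trivial ($j=0$ only), so the single surviving $\log$ sits at $j'=n$. You would need to add this analysis of the transition region to complete the proof.
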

\begin{proof}
Let $P(\lambda)=\vp\partial_{\lambda}^{\,\ell-1} B(\lambda)$. This operator is bounded from $x^{-m/2}L_b^2(M;E)$ to $x^{-m/2+\eps}H_b^{m\ell}(M;E)$ for some $\eps>0$, so it is trace class in $x^{-m/2}L_b^2(M;E)$ since the embedding $x^{-m/2+\eps}H_b^{m\ell}\embed x^{-m/2}L_b^2$ is trace class when $m\ell>\dim M$.

Choose cut-off functions $\omega,\omega_0, \omega_1\in C_c^\infty([0,1))$ such that  $\omega_0\prec \omega\prec \omega_1$, and rewrite
\begin{align*}
 P(\lambda) &= \omega P(\lambda) + (1-\omega)P(\lambda) \\
 &=\omega P(\lambda)\omega_1 + \omega P(\lambda)(1-\omega_1) + 
 (1-\omega)P(\lambda)\omega_0 + (1-\omega)P(\lambda)(1-\omega_0).
\end{align*}
Then, by Lemma~\ref{RapDecreasingTerms},
\begin{equation*}
 P(\lambda)\equiv \omega P(\lambda)\omega_1 + (1-\omega)P(\lambda)(1-\omega_0)
\end{equation*}
modulo an element in $\bigcap_{s,t\in\R} \S(\Lambda,\ell^1(x^{-m/2}H_b^s,x^{-m/2}H_b^t))$. 

Let $P_{\rm int}(\lambda)=(1-\omega)P(\lambda)(1-\omega_0)$. This is a standard parameter-dependent family of trace class operators over the interior of $M$, and it is well-known that
\begin{equation*}
 \Tr P_{\rm int}(\lambda) \sim \sum_{j=0}^\infty \beta_j(\hat\lambda)|\lambda|^{\frac{n-j}{m}-\ell}
\end{equation*}
with coefficients $\beta_{j}\in C^\infty(S^1\cap\Lambda)$, see e.g. \cite{Gilkey,GruSee95}. 

Let $\varrho=|\lambda|^{1/m}\ge 1$ and split
\begin{equation}\label{BoundaryPlambda}
 \omega P(\lambda)\omega_1 = \omega P(\lambda)(1-\omega(x\varrho))\omega_1 
 + \omega P(\lambda)\omega(x\varrho)\omega_1.
\end{equation}
Let $q(\lambda)=\omega P(\lambda)\omega(x\varrho)\omega_1=\omega P(\lambda)\omega(x\varrho)$. For $N\in\N$ we use Lemma~\ref{BSymbolExpansion} to write
\begin{equation*}
 q(\lambda)= \sum_{j=0}^{N-1} q_j(\lambda) + q_{[N]}(\lambda).
\end{equation*}
Since $m\ell>n$, all components of $q(\lambda)$ are trace class, and since the remainder $q_{[N]}(\lambda)$ satisfies \eqref{qSymbolEstimate} with $\alpha=\beta=0$ and $\mu=-m\ell-N$ for every $s\in\R$, we get
\begin{equation*}
 \Tr q_{[N]}(\lambda) = \Tr \big(\kappa_{\varrho}^{-1}q_{[N]}(\lambda)\kappa_{\varrho}\big) = O(|\lambda|^{-\frac{N}{m}-\ell})\;\text{ as }\; |\lambda|\to\infty.
\end{equation*}

On the other hand, since $q_j(\lambda)= q_j(\varrho^m\hat\lambda)= \varrho^{-m\ell-j}\kappa_{\varrho}q_j(\hat\lambda) \kappa_{\varrho}^{-1}$, we get
\begin{equation*}
\Tr q_j(\lambda)= \varrho^{-m\ell-j} \Tr\big(\kappa_{\varrho}q_j(\hat\lambda) \kappa_{\varrho}^{-1}\big) = \varrho^{-m\ell-j}\Tr q_j(\hat\lambda),
\end{equation*}
and thus
\begin{equation*}
 \Tr\big(\omega P(\lambda)\omega(x\varrho)\big) = \Tr q(\lambda) = \sum_{j=0}^{N-1} \beta_j'(\hat\lambda)|\lambda|^{-\frac{j}{m}-\ell} + O(|\lambda|^{-\frac{N}{m}-\ell})
\end{equation*}
with $\beta_j'(\hat\lambda)=\Tr q_j(\hat\lambda)$.

In view of \eqref{Bminlambda}, if we choose $\omega\prec\tilde\omega$ and $\omega_1\prec \tilde\omega_1$, then the first component of the right-hand side of equation \eqref{BoundaryPlambda} becomes
\begin{equation*}
 \omega P(\lambda)(1-\omega(x|\lambda|^{1/m}))\omega_1
 = P_{\log}(\lambda) + \omega g(\lambda) \omega_1,
\end{equation*}
where
\begin{equation*}
 P_{\log}(\lambda) =\omega \vp \big(\partial_{\lambda}^{\,\ell-1} Q(\lambda)\big) (1-\omega(x|\lambda|^{1/m}))\omega_1
\end{equation*}
and $g(\lambda)$ is a Green remainder with an expansion in $\lambda$ similar to the one for $q(\lambda)$. In fact,
\begin{equation*}
 \Tr\big(\omega g(\lambda) \omega_1\big)\sim \sum_{j=0}^\infty \beta_j''(\hat\lambda)|\lambda|^{-\frac{j}{m}-\ell}
\end{equation*}
with coefficients $\beta_{j}''\in C^\infty(S^1\cap\Lambda)$. 

It remains to expand $\Tr P_{\log}(\lambda)$. First of all, observe that the family $\vp \partial_{\lambda}^{\,\ell-1} Q(\lambda)$ is of the form \eqref{Bminlambda} with $x^m h(x,\sigma,x^m\lambda)$ replaced by $x^{m\ell} h^{(\ell)}(x,\sigma,x^m\lambda)$, where
\begin{equation*}
h^{(\ell)}(x,\sigma,\lambda) =\vp(x)\big(\partial_{\lambda}^{\,\ell-1} h\big)(x,\sigma,\lambda) \in C^\infty(\overline{\R}_+,L_{c\ell}^{-m\ell,(1,m)}(Y;\R\times\Lambda)).
\end{equation*}
By means of a Taylor expansion at $x=0$, we can write
\begin{equation*}
 h^{(\ell)}(x,\sigma,\lambda)=\sum_{j=0}^{N-1} x^j h_{j}(\sigma,\lambda)
 + x^N h_{[N]}(x,\sigma,\lambda)
\end{equation*}
and obtain a decomposition
\begin{equation*}
 P_{\log}(\lambda) =\omega\Big(\sum_{j=0}^{N-1}Q_j(\lambda)+Q_{[N]}(\lambda)\Big) (1-\omega(x|\lambda|^{1/m}))\omega_1, 
\end{equation*}
where $Q_j(\lambda)$ and $Q_{[N]}(\lambda)$ are of the form \eqref{Bminlambda} with $x^m h(x,\sigma,x^m\lambda)$ replaced by $x^{m\ell+j}h_j(\sigma,x^m\lambda)$ and $x^{m\ell+N}h_{[N]}(x,\sigma,x^m\lambda)$, respectively. This induces a decomposition of the trace
\begin{equation}\label{PlogTrace}
 \Tr P_{\log}(\lambda) = \sum_{j=0}^{N-1} \tau_j(\lambda) + \tau_{[N]}(\lambda)
\end{equation}
with the obvious meaning of notation. For every $j<N$,
\begin{equation}\label{taujLambda}
 \tau_j(\lambda)= \int_Y\int_0^\infty x^{m\ell+j} \omega(x) (1-\omega(x|\lambda|^{1/m})) k_{j}(y,x^m\lambda)\,\frac{dx}{x}dy,
\end{equation}
where $k_{j}(y,\lambda)$ is locally given by
\begin{equation}\label{LocalKernel}
 k_{j}(y,\lambda)=\int_{\R^n} p(y,\eta,\lambda) \,\dbar\eta
\end{equation}
for some parameter-dependent classical symbol $p(y,\eta,\lambda)$ of order $-m\ell$. Here we use the notation $\eta=(\sigma,\xi)\in \R\times\R^{n-1}$ and $\dbar\eta= \frac{1}{(2\pi)^{n}}d\eta$.

To simplify the notation, and without loss of generality, we assume $\omega(x)=1$ for $0\le x\le 1$ and $\omega(x)=0$ for $x\ge 2$. In particular, $1-\omega(x\varrho) = 0$ for $0\le x\le \tfrac{1}{\varrho}$. 

Fix $j$ and let 
\begin{align*}
 s(y,\lambda)&= \int_0^\infty x^{m\ell+j} \omega(x) (1-\omega(x\varrho)) k_{j}(y,x^m\lambda)\,\frac{dx}{x}\\
 &= \int_{1/\varrho}^\infty x^{m\ell+j} \omega(x) (1-\omega(x\varrho)) k_{j}(y,x^m\lambda)\,\frac{dx}{x}.
\end{align*}
For $J\in \N$ we expand 
\begin{equation*}
p(y,\eta,\lambda) = \sum_{k=0}^{J-1} p_k(y,\eta,\lambda) + p_{[J]}(y,\eta,\lambda),
\end{equation*}
where $p_{[J]}$ is a symbol of order $-m\ell-J$, and for $|\eta|+|\lambda|^{1/m}\ge 1$,
\begin{equation*}
p_k(y,t\eta,t^m\lambda)=t^{-m\ell-k}p_k(y,t\eta,t^m\lambda) \text{ for every } t\ge 1.
\end{equation*}
By \eqref{LocalKernel} this expansion induces a decomposition $s(y,\lambda)=s_0(y,\lambda)+\cdots+s_{[J]}(y,\lambda)$. We will obtain an expansion in $\lambda$ of $\tau_j(\lambda)$ through an expansion in $\lambda$ of every component of $s(y,\lambda)$. Most of the computations are done locally over a patch $\Omega$ of $Y$ and put together by means of a partition of unity. 

Let $\hat\lambda=\frac{\lambda}{|\lambda|}$ and let $k_{[J]}(y,\lambda)$ be the function locally defined by $\int p_{[J]}(y,\eta,\lambda) \,\dbar\eta$. Then
\begin{align*}
 s_{[J]}(y,\lambda)&= \int_{1/\varrho}^\infty x^{m\ell+j} \omega(x) (1-\omega(x\varrho)) k_{[J]}(y,(x\varrho)^m\hat \lambda)\,\frac{dx}{x}\\
 &= \varrho^{-m\ell-j}\int_{1}^\infty x^{m\ell+j} \omega(x/\varrho) (1-\omega(x)) k_{[J]}(y,x^m\hat \lambda)\,\frac{dx}{x}.
\end{align*}
If we write $\omega(x)=1+x^{N-j}\omega_{[N-j]}(x)$ and choose $J=N+1$, then
\begin{align*}
 s_{[J]}(y,\lambda) &= \varrho^{-m\ell-j}\int_{1}^\infty x^{m\ell+j} (1-\omega(x)) k_{[J]}(y,x^m\hat \lambda)\,\frac{dx}{x} \\
 &\quad + \varrho^{-m\ell-N}\int_{1}^\infty x^{m\ell+N} \omega_{[N-j]}(x/\varrho)(1-\omega(x)) k_{[J]}(y,x^m\hat \lambda)\,\frac{dx}{x},
\end{align*}
and the last integral converges uniformly in $\varrho$. Thus
\begin{equation}\label{sJExpansion}
 s_{[J]}(y,\lambda) = \alpha(y,\hat\lambda) |\lambda|^{-\frac{j}{m}-\ell} +O(|\lambda|^{-\frac{N}{m}-\ell})
\end{equation}
with $\alpha(y,\hat\lambda)$ depending on $\ell,j$, and $J$.

We now proceed to expand $s_k(y,\lambda)$ for $0\le k< J$. We assume $\varrho\ge 2$ and write
\begin{equation*}
 s_{k}(y,\lambda) = \int_{1/\varrho}^\infty x^{m\ell+j} \omega(x)(1-\omega(x\varrho)) \left(\int_{\R^n} p_k(y,\eta,(x\varrho)^m\hat\lambda)\,\dbar\eta\right)\frac{dx}{x}.
\end{equation*}
The change $\eta\to (x\varrho)\eta$ and the homogeneity of $p_k$ (with $t=x\varrho\ge 1$) give
\begin{align*}
 s_{k}(y,\lambda) &= \varrho^{-m\ell-k+n}\int_{1/\varrho}^\infty x^{j-k+n} \omega(x)(1-\omega(x\varrho)) \left(\int p_k(y,\eta,\hat\lambda) \,\dbar\eta\right)\frac{dx}{x} \\
 &= \varrho^{-m\ell-k+n} \left(\int p_k(y,\eta,\hat\lambda) \,\dbar\eta\right) \int_{1/\varrho}^\infty x^{j-k+n} \omega(x)(1-\omega(x\varrho))\frac{dx}{x}.
\end{align*}
Since
\begin{equation*} 
 \int_{1/\varrho}^\infty x^{j-k+n} \omega(x)(1-\omega(x\varrho))\frac{dx}{x}
 = \begin{cases} 
     c_{1}+c_{2}\,\varrho^{-j+k-n} & \text{if } k\not=j+n,\\ 
     \log\varrho & \text{if } k=j+n, 
   \end{cases}
\end{equation*}
for some constants $c_1,c_2\in\R$, we get
\begin{equation}\label{skExpansion}
 s_{k}(y,\lambda)= \alpha(y,\hat\lambda) |\lambda|^{\frac{n-k}{m}-\ell}
 + \alpha'(y,\hat\lambda) |\lambda|^{-\frac{j}{m}-\ell} 
 + \alpha''(y,\hat\lambda) |\lambda|^{-\frac{j}{m}-\ell}\log|\lambda|
\end{equation}
with functions $\alpha, \alpha', \alpha''\in C^\infty(\Omega\times(S^1\cap\Lambda))$ that depend on $\ell,j$, and $k$. In particular, $\alpha(y,\hat\lambda)=0$ for $k<n$, and
\begin{equation*}
 \alpha''(y,\hat\lambda) =
 \begin{cases}
 \frac1m \int_{\R^n} p_k(y,\eta,\hat\lambda)\,\dbar\eta& \text{if } k=j+n, \\
 \qquad\qquad 0 &\text{if } k\not=j+n.
 \end{cases}
\end{equation*}

Similar to \eqref{taujLambda}, we have
\begin{equation*}
 \tau_{[N]}(\lambda)= \int_Y\int_0^\infty x^{m\ell+N} \omega(x) (1-\omega(x\varrho)) k_{N}(x,y,x^m\lambda)\,\frac{dx}{x}dy,
\end{equation*}
where $k_{N}(x,y,\lambda)$ is locally given by
\begin{equation*}
 k_{N}(x,y,\lambda)=\int_{\R^n} q(x,y,\eta,\lambda) \,\dbar\eta
\end{equation*}
for some parameter-dependent classical symbol $q(x,y,\eta,\lambda)$ of order $-m\ell$. Let
\begin{equation*}
 t_N(y,\lambda)=\int_0^\infty x^{m\ell+N} \omega(x) (1-\omega(x\varrho)) 
 \left(\int q(x,y,\eta,x^m\lambda) \,\dbar\eta\right)\frac{dx}{x}.
\end{equation*}
We let $J=N+1$ and expand $q$ in homogeneous components with a remainder of order $-m\ell-J$. This gives a decomposition $t_N(y,\lambda)=t_{N,0}(y,\lambda)+\cdots+t_{N,[J]}(y,\lambda)$.
First, we examine $t_{N,[J]}$. With the change of variables $x\to x/\varrho$, we get 
\begin{equation*}
t_{N,[J]}(y,\lambda)=\varrho^{-m\ell-N}\!\int_0^\infty x^{m\ell+N} (1-\omega(x)) \left(\int \omega(\tfrac{x}{\varrho})q_{[J]}(\tfrac{x}{\varrho},y,\eta,x^m\hat\lambda) \,\dbar\eta\right)\frac{dx}{x}
\end{equation*}
and the integral converges uniformly in $\varrho$. Thus
\begin{equation}\label{sNJExpansion}
 t_{N,[J]}(y,\lambda) = O(|\lambda|^{-\frac{N}{m}-\ell}) 
 \;\text{ as } |\lambda|\to\infty.
\end{equation}

For $0\le k<J$ we have
\begin{align*}
 t_{N,k}(y,\lambda) 
 &= \int_0^\infty x^{m\ell+N} \omega(x) (1-\omega(x\varrho)) 
 \left(\int_{\R^n} q_k(x,y,\eta,(x\varrho)^m\hat\lambda) \,\dbar\eta\right)\frac{dx}{x}\\
 &= \int_0^\infty x^{m\ell+N} \omega(x) (1-\omega(x\varrho)) (x\varrho)^{-m\ell-k+n}
 \left(\int_{\R^n} q_k(x,y,\eta,\hat\lambda) \,\dbar\eta\right)\frac{dx}{x}
\end{align*}
using the change of variables $\eta\to (x\varrho)\eta$ and the homogeneity of $q_k$. Thus, 
\begin{equation*}
\begin{split} 
 t_{N,k}(y,\lambda) 
 &= \varrho^{-m\ell-k+n}\int_0^\infty x^{N-k+n} \omega(x) \left(\int q_k(x,y,\eta,\hat\lambda) \,\dbar\eta\right) \frac{dx}{x} \\ 
 &\qquad -\int_0^\infty x^{m\ell+N} \omega(x)\omega(x\varrho) (x\varrho)^{-m\ell-k+n}
 \left(\int q_k(x,y,\eta,\hat\lambda) \,\dbar\eta\right)\frac{dx}{x}.
\end{split}
\end{equation*}
Now, with the change of variables $x\to x/\varrho$, the last integral becomes
\begin{equation*}
 \varrho^{-m\ell-N}\!\int_0^\infty x^{N-k+n} \omega(x) \left(\int \omega(x/\varrho) q_k(x/\varrho,y,\eta,\hat\lambda) \,\dbar\eta\right)\frac{dx}{x},
\end{equation*}
and the integral converges uniformly in $\varrho$. In conclusion, 
\begin{equation}\label{sNkExpansion}
 t_{N,k}(y,\lambda)= \alpha(y,\hat\lambda)|\lambda|^{\frac{n-k}{m}-\ell} + O(|\lambda|^{-\frac{N}{m}-\ell})
\end{equation}
with $\alpha(y,\hat\lambda)=\int_0^\infty x^{N-k+n} \omega(x) \left(\int q_k(x,y,\eta,\hat\lambda) \,\dbar\eta\right) \frac{dx}{x}$.

Finally, integrating over $Y$ the expressions in \eqref{sJExpansion}, \eqref{skExpansion}, \eqref{sNJExpansion}, and \eqref{sNkExpansion}, we arrive at the expansion
\begin{equation*}
 \Tr P_{\log}(\lambda) \sim \sum_{j=0}^\infty
 \gamma_j(\hat\lambda) |\lambda|^{\frac{n-j}{m}-\ell}
 + \sum_{j=0}^\infty \gamma_j'(\hat\lambda) |\lambda|^{-\frac{j}{m}-\ell}\log|\lambda|
\end{equation*}
with coefficients $\gamma_j, \gamma_j'\in C^\infty(S^1\cap\Lambda)$.

If $A$ and $\varphi$ have coefficients independent of $x$ near $\partial M$, then so does the Mellin symbol $h^{(\ell)}(x,\sigma,\lambda)$ of $\vp\partial_{\lambda}^{\,\ell-1} B(\lambda)$ and there is no need for a Taylor expansion. In other words, $\Tr P_{\log}(\lambda)=\tau_0(\lambda)$ in \eqref{PlogTrace}. In this case, \eqref{skExpansion} becomes
\begin{equation*}
 s_{k}(y,\lambda)= \alpha(y,\hat\lambda) |\lambda|^{\frac{n-k}{m}-\ell}
 + \alpha'(y,\hat\lambda) |\lambda|^{-\ell} 
 + \alpha''(y,\hat\lambda) |\lambda|^{-\ell}\log|\lambda|
\end{equation*}
with $\alpha(y,\hat\lambda)=0$ for $k<n$, and
\begin{equation*}
 \alpha''(y,\hat\lambda) =
 \begin{cases}
 \frac1m \int_{\R^n} p_k(y,\eta,\hat\lambda)\,\dbar\eta& \text{if } k=n, \\
 \qquad\qquad 0 &\text{if } k\not=n.
 \end{cases}
\end{equation*}
Consequently, there is only one $\log$ term in the expansion of $\Tr P(\lambda)$.
\end{proof}

%%%%%%%%%%%%%%%%%%%%%%%%%%%%%%%%%%%%%%%%%%%%%%%%%%%%%%%%%%%%%%%%%%%%%
\section{Asymptotic expansion of the finite rank contribution}
\label{sec:FiniteContribution}
%%%%%%%%%%%%%%%%%%%%%%%%%%%%%%%%%%%%%%%%%%%%%%%%%%%%%%%%%%%%%%%%%%%%%

We let $A$ be a cone differential operator of order $m$ that satisfies all the ellipticity conditions outlined at the beginning of Section~\ref{sec:ResolventStructure}. In addition, let $\Dom$ be stationary. 

In this section, we discuss the structure and asymptotic properties of the family $G_\Dom(\lambda)$ described in \eqref{GreenRemainder}. Our analysis leads to a full asymptotic expansion of $\Tr G_\Dom(\lambda)$ as $|\lambda|\to\infty$. For simplicity, let $L^2_b=L^2_b(M;E)$ and $L^2_{b,\wedge}=L^2_{b}(Y^\wedge;E)$.

\begin{theorem}\label{GStructure}
Let $\vp\in C^\infty(M;\textup{End}(E))$ and let $R>0$ be such that $G_\Dom(\lambda)$ exists for every $\lambda\in\Lambda_R$. Let $\omega,\tilde\omega\in C_c^\infty([0,1))$ be arbitrary cut-off functions. Then:
\begin{enumerate}[$(i)$]
\item The families $(1-\omega)\vp G_\Dom(\lambda)$ and $\vp G_\Dom(\lambda)(1-\omega)$ are in $\S(\Lambda_R,\ell^1(x^{-m/2}L^2_b))$, and $\omega\vp G_\Dom(\lambda)\tilde \omega \in C^\infty(\Lambda_R,\ell^1(x^{-m/2}L^2_{b,\wedge}))$;
\end{enumerate}
in addition, with $g(\lambda)=\omega\vp G_\Dom(\lambda)\tilde \omega$, 
\begin{enumerate}[$(i)$] \stepcounter{enumi}
\item for every $\alpha,\beta\in\N_0$ we have
\begin{equation}\label{gSymbolEstimate}
\norm{\kappa_{|\lambda|^{1/m}}^{-1}\big(\partial_\lambda^\alpha \partial_{\bar\lambda}^\beta\, g(\lambda)\big)\kappa_{|\lambda|^{1/m}}}_{\ell^1} = O(|\lambda|^{\frac{\mu}{m}-\alpha-\beta}) \;\text{ as }\; |\lambda|\to\infty,
\end{equation}
with $\mu=-m$; 
\item for every $j\in\N_0$ there exist $m_j\in\N_0$ and $g_{jk}\in C^\infty(\Lambda\minus\{0\},\ell^1(x^{-m/2}L^2_{b,\wedge}))$, $k=0,\dots,m_j$, with
\begin{equation*}
 g_{jk}(\varrho^m\lambda)=\varrho^{-m-j}\kappa_\varrho
 g_{jk}(\lambda)\kappa_{\varrho}^{-1} \;\text{ for every } \varrho>0,
\end{equation*} 
such that for every $N\in \N$, the difference
\begin{equation}\label{gAsympExp}
 g(\lambda)- \sum_{j=0}^{N-1}\sum_{k=0}^{m_j} g_{jk}(\lambda) \log^k|\lambda| 
\end{equation}
satisfies \eqref{gSymbolEstimate} with $\mu=-m-N+\eps$ for any $\eps>0$. 
Here $m_0=0$, and 
\[ g_{00}(\lambda)= \vp_0[1-B_\wedge(\lambda)(A_\wedge-\lambda)]\theta\, \phi_{00}(\lambda) t_0(\lambda)\] 
with $t_0(\lambda)$ as in \eqref{TAsympExp} and $\phi_{00}(\lambda)$ as in \eqref{FInvAsympExp}. If $A$ has coefficients independent of $x$ near $\partial M$, then $m_j=0$ for all $j$.
\end{enumerate}
\end{theorem}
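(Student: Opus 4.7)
The plan is to analyze each of the three factors in the representation
\[ G_{\Dom}(\lambda) = [1-B(\lambda)(A-\lambda)]F_\Dom(\lambda)^{-1}T(\lambda) \]
from \eqref{GreenRemainder}, and exploit stationarity to get a complete $\kappa$-homogeneous expansion of $F_\Dom(\lambda)^{-1}$. All estimates will be proved directly in trace norm, which is feasible because $F_\Dom(\lambda)^{-1}$ takes values in the fixed finite-dimensional space $\Dom/\Dom_{\min}$, so $G_\Dom(\lambda)$ has uniformly bounded finite rank and its trace norm is controlled by its operator norm up to a fixed constant.

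For part (i), I would combine Lemma~\ref{RapDecreasingTerms}, applied to the factor $1-B(\lambda)(A-\lambda)$, with the fact (inherited from the construction of $T(\lambda)$ in Proposition~\ref{Tlambda} and the explicit form of $\Dom/\Dom_{\min}$ via \eqref{quotientIso}) that elements of the image of $F_\Dom(\lambda)^{-1}$ are supported near $\partial M$ modulo Schwartz-in-$\lambda$ smoothing contributions. Composing with $\vp$ and the cut-off $1-\omega$ then places $(1-\omega)\vp G_\Dom(\lambda)$ and $\vp G_\Dom(\lambda)(1-\omega)$ in $\S(\Lambda_R,\ell^1(x^{-m/2}L^2_b))$, and the smoothness on $Y^\wedge$ of the localized piece $\omega\vp G_\Dom(\lambda)\tilde\omega$ follows from the smoothness of $\lambda\mapsto(A_\Dom-\lambda)^{-1}$ together with the finite rank.

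For parts (ii) and (iii), I would separately expand each of the three factors near the boundary and then compose. By Proposition~\ref{Tlambda}, $T(\lambda)\sim\sum_{j\ge 0} t_j(\lambda)$ with $\kappa$-homogeneous components whose principal part is $t_0(\lambda)$. Stationarity forces $\Dom_\wedge$ to be $\kappa$-invariant, so by Proposition~\ref{FStructure} the wedge symbol $F_\wedge(\lambda)$ is genuinely $\kappa$-homogeneous; inverting the full expansion of $F_\Dom(\lambda)$ via a Neumann series against this principal part yields an expansion of $F_\Dom(\lambda)^{-1}$ with leading component $\phi_{00}(\lambda)$. Lemma~\ref{BSymbolExpansion} and Proposition~\ref{PStructure} provide the expansion of $\omega[1-B(\lambda)(A-\lambda)]$, whose principal wedge term is $[1-B_\wedge(\lambda)(A_\wedge-\lambda)]\theta$, where the map $\theta$ encodes the passage from $\Dom/\Dom_{\min}$ on $M$ to $\Dom_{\wedge,\max}/\Dom_{\wedge,\min}$ on $Y^\wedge$. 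The composition of the three expansions, together with the equivariance $\kappa_\varrho\theta=\theta\kappa_\varrho$ and the additivity of $\kappa$-homogeneity degrees under composition, gives \eqref{gAsympExp} with the stated principal term $g_{00}(\lambda)=\vp_0[1-B_\wedge(\lambda)(A_\wedge-\lambda)]\theta\,\phi_{00}(\lambda)t_0(\lambda)$. The trace-norm estimates \eqref{gSymbolEstimate} then reduce to operator-norm estimates that follow immediately from the $\kappa$-homogeneity of the components.

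The main obstacle is the bookkeeping of the logarithmic factors. Powers of $\log|\lambda|$ arise when inverting $F_\Dom(\lambda)$: the Taylor coefficients $P_\nu$ of $A$ at $Y$, which enter the construction of $B(\lambda)$ and hence of $F_\Dom(\lambda)$, couple with the boundary spectral weights $\Sigma$ and the $\kappa$-homogeneities of $T(\lambda)$ to force logarithms in the nonprincipal orders. When $A$ has coefficients independent of $x$ near $\partial M$, the Taylor expansion collapses to $A=x^{-m}P_0$, no such coupling occurs, and $m_j=0$ for all $j$. Carrying out this resonance analysis in general, and verifying that the resulting estimates respect the finite-rank trace-norm symbol class uniformly under $\kappa_{|\lambda|^{1/m}}$-conjugation, is where most of the work lies.
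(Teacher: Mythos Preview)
Your proposal is essentially the paper's own argument: decompose $G_\Dom(\lambda)$ via \eqref{GreenRemainder} into the three factors $[1-B(\lambda)(A-\lambda)]$, $F_\Dom(\lambda)^{-1}$, and $T(\lambda)$; expand each factor separately using Propositions~\ref{Tlambda}, \ref{FStructure}, \ref{FInvStructure}, and \ref{PStructure}; and then compose. The paper does not write out a separate proof of Theorem~\ref{GStructure} but states explicitly that it follows from these structural propositions, so your outline matches.

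Two small points of precision. First, your formula $\kappa_\varrho\theta=\theta\kappa_\varrho$ is not literally correct as stated: the paper works with the transported action $\tilde\kappa_\varrho=\theta^{-1}\kappa_\varrho\theta$ on $\Dom_{\max}/\Dom_{\min}$ (see \eqref{kappatilde}), so the intertwining relation is $\kappa_\varrho\theta=\theta\tilde\kappa_\varrho$; this is what makes the homogeneities of the three factors compose correctly. Second, the logarithmic terms do not first appear upon inverting $F_\Dom(\lambda)$: they are already present in the expansions of $F(\lambda)$ and of $[1-B(\lambda)(A-\lambda)]$ themselves (Propositions~\ref{FStructure} and \ref{PStructure}), arising from the operators $\e_{\sigma_0,\vartheta}(\varrho)$ in \eqref{esigmatheta}, which are polynomial in $\log\varrho$. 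The Taylor coefficients $P_\nu$ enter through the definition of the $\e_{\sigma_0,\vartheta}$, so your attribution is ultimately correct, but the mechanism is that the lift $\Ka_\ell(\varrho)$ of $\tilde\kappa_\varrho$ carries these logs, not the inversion step.
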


\begin{corollary}\label{TraceGStructure}
For $R> 0$ sufficiently large and $\vp\in C^\infty(M;\textup{End}(E))$, the operator family $\varphi G_D(\lambda)$ is a smooth family of trace class operators in $x^{-m/2}L^2_b$ for $\lambda \in \Lambda_R$. We have an asymptotic expansion
\begin{equation*}
\Tr \bigl(\varphi G_D(\lambda)\bigr) \sim
\sum\limits_{j=0}^{\infty}\sum\limits_{k=0}^{m_j}\gamma_{jk}(\hat{\lambda}) |\lambda|^{-\frac{j}{m}-1}\log^k|\lambda| \; \text{ as } |\lambda| \to \infty,
\end{equation*}
where $\hat\lambda= \lambda/|\lambda|$, $\gamma_{jk} \in C^{\infty}(S^1\cap\Lambda)$, and $m_0 =0$. This expansion can be differentiated formally to obtain expansions of $\Tr \bigl(\varphi
\partial^{\alpha}_{\lambda}\partial^{\beta}_{\bar\lambda}G_D(\lambda)\bigr)$ for any $\alpha,\beta \in \N_0$. If $A$ has coefficients independent of $x$ near $\partial M$, then $m_j=0$ for all $j$.
\end{corollary}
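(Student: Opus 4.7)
The plan is to reduce the corollary entirely to the structural information in Theorem~\ref{GStructure}, using the $\kappa$-invariance of the trace norm to pass from operator asymptotics to scalar trace asymptotics. First I would pick cut-off functions $\omega, \tilde\omega \in C_c^\infty([0,1))$ with $\omega \prec \tilde\omega$ and decompose
\begin{equation*}
\varphi G_\Dom(\lambda) = \omega\varphi G_\Dom(\lambda)\tilde\omega + \varphi G_\Dom(\lambda)(1-\tilde\omega) + (1-\omega)\varphi G_\Dom(\lambda)\tilde\omega.
\end{equation*}
By part $(i)$ of Theorem~\ref{GStructure}, the last two summands lie in $\S(\Lambda_R, \ell^1(x^{-m/2}L^2_b))$: they are smooth trace class families whose trace (and the trace of every $\lambda,\bar\lambda$-derivative) is $O(|\lambda|^{-\infty})$. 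So they contribute only to the remainder in every asymptotic statement and can be discarded.

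For the main term $g(\lambda) = \omega\varphi G_\Dom(\lambda)\tilde\omega$, I would apply the asymptotic expansion from Theorem~\ref{GStructure}$(iii)$: for each $N\in\N$,
\begin{equation*}
g(\lambda) = \sum_{j=0}^{N-1}\sum_{k=0}^{m_j} g_{jk}(\lambda)\log^k|\lambda| + R_N(\lambda),
\end{equation*}
where $R_N$ satisfies the trace-norm symbol estimate \eqref{gSymbolEstimate} with $\mu = -m-N+\eps$. Since $\kappa_\varrho$ is an isometry on $x^{-m/2}L^2_{b,\wedge}$, conjugation by $\kappa_{|\lambda|^{1/m}}$ preserves the trace norm, so $\|R_N(\lambda)\|_{\ell^1} = O(|\lambda|^{-1-(N-\eps)/m})$ and hence $|\Tr R_N(\lambda)| \le \|R_N(\lambda)\|_{\ell^1} = O(|\lambda|^{-1-(N-\eps)/m})$. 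For the homogeneous components, the relation
\begin{equation*}
g_{jk}(\lambda) = |\lambda|^{-1-j/m}\kappa_{|\lambda|^{1/m}} g_{jk}(\hat\lambda)\kappa_{|\lambda|^{1/m}}^{-1}
\end{equation*}
together with trace invariance under isometric conjugation gives $\Tr g_{jk}(\lambda) = |\lambda|^{-1-j/m}\Tr g_{jk}(\hat\lambda)$. Setting $\gamma_{jk}(\hat\lambda) = \Tr g_{jk}(\hat\lambda)$, which is smooth in $\hat\lambda \in S^1\cap\Lambda$ by the smoothness of $g_{jk}$ on $\Lambda\minus\{0\}$, assembling the pieces yields the claimed expansion of $\Tr(\varphi G_\Dom(\lambda))$. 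The vanishing $m_0 = 0$ and, in the case of coefficients independent of $x$ near $\partial M$, the vanishing of all $m_j$, carry over directly from the corresponding statements in $(iii)$.

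For the formal differentiability claim, the same strategy applies to $\partial^\alpha_\lambda\partial^\beta_{\bar\lambda} g(\lambda)$: part $(ii)$ supplies the required trace-norm symbol estimate on derivatives, and differentiating the expansion in $(iii)$ termwise produces $\kappa$-homogeneous pieces of the correct shifted order along with strictly lower-order $\log$ terms that are absorbed into the same asymptotic framework. Taking traces then yields the termwise differentiated expansion of $\Tr(\varphi G_\Dom(\lambda))$.

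The main technical point is the passage from trace-norm operator asymptotics to scalar asymptotics of the trace uniformly in $\hat\lambda$: this rests on the $\kappa$-invariance of $\|\cdot\|_{\ell^1}$ together with the joint smoothness of the components $g_{jk}$ on $\Lambda\minus\{0\}$, both furnished by Theorem~\ref{GStructure}. Once those two ingredients are accepted, the remainder of the argument is essentially bookkeeping.
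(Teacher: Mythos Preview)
Your proof is correct and follows essentially the same approach as the paper: decompose $\varphi G_\Dom(\lambda)$ using cut-offs, discard the rapidly decreasing terms via Theorem~\ref{GStructure}$(i)$, and extract the trace expansion from the $\kappa$-homogeneity of the $g_{jk}$ together with the invariance of the trace under unitary conjugation. The only cosmetic difference is that the paper phrases the formal differentiability by noting that $\Tr(\varphi G_\Dom(\lambda))$ is a scalar log-polyhomogeneous symbol, whereas you differentiate the operator expansion termwise; both arguments amount to the same thing.
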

\begin{proof}
For cut-off functions $\omega,\tilde{\omega} \in C_c^{\infty}([0,1))$ write
\begin{equation*}
\varphi G_D(\lambda) = \omega \varphi G_D(\lambda) \tilde{\omega} +
(1-\omega)\varphi G_D(\lambda) + \omega \varphi G_D(\lambda)(1-\tilde{\omega}).
\end{equation*}
Both $(1-\omega)\varphi G_D(\lambda)$ and $\omega \varphi G_D(\lambda)(1-\tilde{\omega})$ are smooth and rapidly decreasing (with all derivatives) taking values in the trace class
operators. Hence they are both negligible. Moreover, $g(\lambda) = \omega \varphi G_D(\lambda) \tilde{\omega}$ is smooth taking values in the trace class operators, and we have
\begin{equation*}
\Tr g(\lambda) \sim \sum\limits_{j=0}^{\infty}\sum\limits_{k=0}^{m_j}
\Tr(g_{jk}(\lambda))\log^k|\lambda| \; \text{ as } |\lambda| \to \infty,
\end{equation*}
with the $g_{jk}(\lambda)$ as in \eqref{gAsympExp}. Since
\begin{equation*}
g_{jk}(\lambda) = g_{jk}(|\lambda|\hat{\lambda}) = |\lambda|^{-\frac{j}{m}-1}\kappa_{|\lambda|^{1/m}} g_{jk}(\hat{\lambda})
\kappa_{|\lambda|^{1/m}}^{-1} \;\text{ for } |\lambda|>0, 
\end{equation*}
we get $\Tr(g_{jk}(\lambda)) = \Tr (g_{jk}(\hat{\lambda})) |\lambda|^{-\frac{j}{m}-1}$. We let $\gamma_{jk}(\hat{\lambda})= \Tr (g_{jk}(\hat{\lambda}))$.

It is clear that the above asymptotic expansion can be differentiated formally. In fact, by Theorem~\ref{GStructure} and because of the trace property, we have that $\Tr(\varphi G_D(\lambda))$ is a scalar $\log$-polyhomogeneous symbol in the sector $\Lambda_R$. The claimed asymptotic expansion is thus an asymptotic expansion of symbols in $\Lambda_R$.
\end{proof}

The proof of Theorem~\ref{GStructure} follows from structural results that we will present in the next set of propositions. We let $R>0$ be as in Theorem~\ref{ResolventStructure}.

The first proposition is a direct consequence of the parametrix construction for $A_\Dom-\lambda$ given in \cite[Section~5]{GKM2}.

\begin{proposition}\label{Tlambda}
Let $\omega\in C_c^\infty([0,1))$ be an arbitrary cut-off function.
The operator family $T(\lambda)$ has the following properties:
\begin{enumerate}[$(i)$]
\item For every $s\in\R$ we have $T(\lambda)(1-\omega) \in\S(\Lambda_R,\L(x^{-m/2}H_b^s,\C^d))$ and $T(\lambda)\omega \in C^\infty(\Lambda_R,\L(\K^{s,-m/2},\C^d))$; 
\end{enumerate}
in addition, with $t(\lambda)=T(\lambda)\omega$, 
\begin{enumerate}[$(i)$] \stepcounter{enumi}
\item for every $\alpha,\beta\in\N_0$ we have
\begin{equation}\label{TSymbolEstimate}
\norm{\big(\partial_\lambda^\alpha \partial_{\bar\lambda}^\beta\, t(\lambda)\big)\kappa_{|\lambda|^{1/m}}}
= O(|\lambda|^{\frac{\mu}{m}-\alpha-\beta}) \;\text{ as }\; |\lambda|\to\infty,
\end{equation}
with $\mu=-m$;
\item there are $t_j\in C^\infty(\Lambda\minus\{0\},\L(\K^{s,-m/2},\C^d))$, $j\in\N_0$, with
\begin{equation*}
 t_j(\varrho^m\lambda)=\varrho^{-m-j}
 t_j(\lambda)\kappa_\varrho^{-1} \;\text{ for every } \varrho>0,
\end{equation*} 
such that for every $N\in \N$, the difference
\begin{equation}\label{TAsympExp}
 t(\lambda)- \sum_{j=0}^{N-1} t_j(\lambda)
\end{equation}
satisfies \eqref{TSymbolEstimate} with $\mu=-m-N$.
\end{enumerate}
\end{proposition}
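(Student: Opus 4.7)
The plan is to leverage the explicit construction of $T(\lambda)$ in \cite[Section~5]{GKM2}, which builds $T(\lambda)$ as the bottom block of the inverse of $\begin{pmatrix}(A-\lambda)|_{\Dom_{\min}} & K(\lambda)\end{pmatrix}$ via a Mellin quantization and edge symbol calculus near the boundary, combined with standard parameter-dependent pseudodifferential techniques in the interior. First I would separate $T(\lambda)$ into an interior piece and a boundary piece by inserting cut-off functions. This handles (i) immediately: the interior portion is assembled from a standard parameter-dependent smoothing family with rapid decay in $\lambda$ (since $A-\lambda$ is parameter-elliptic on $\open M$ and $K(\lambda)$ is supported near $Y$), while the boundary portion inherits its smoothness in $\lambda$ and its continuity from $\K^{s,-m/2}$ to $\C^d$ directly from the edge calculus used to construct it.

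For the symbol estimate \eqref{TSymbolEstimate} in (ii), the key observation is that after conjugation by $\kappa_{|\lambda|^{1/m}}$, Mellin-quantized families living near the cone point become parameter-dependent symbols in Schulze's twisted sense. The built-in rescaling $x\mapsto x|\lambda|^{1/m}$ coming from the construction of $K(\lambda)$, and hence of $T(\lambda)$, produces the decay of order $\mu=-m$ in $|\lambda|$, with each $\lambda$- or $\bar\lambda$-derivative contributing one additional factor of $|\lambda|^{-1}$ by the standard arithmetic for parameter-dependent edge symbols.

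For the asymptotic expansion in (iii), the leading component is the wedge symbol $t_0(\lambda)=T_\wedge(\lambda)$, and its $\kappa$-homogeneity $t_0(\varrho^m\lambda)=\varrho^{-m}t_0(\lambda)\kappa_\varrho^{-1}$ is a direct consequence of the $\kappa$-homogeneity \eqref{kappaHomogeneous} of $A_\wedge-\lambda$ together with the analogous homogeneity of the model operator $K_\wedge(\lambda)$. For the lower order terms, I would Taylor expand at $x=0$ the coefficients of $A$ near $Y$ and the Mellin symbol $h$ appearing in \eqref{Bminlambda}; each $x^j$-contribution feeds through the inversion of the block matrix to produce a $\kappa$-homogeneous component $t_j(\lambda)$ of twisted order $-m-j$, exactly parallel to the expansion for $B(\lambda)$ recorded in Lemma~\ref{BSymbolExpansion}. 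The remainder estimate \eqref{TSymbolEstimate} with $\mu=-m-N$ then follows from the order-$N$ Taylor remainder combined with the same twisted symbol arithmetic.

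The main obstacle will be the bookkeeping required to lift a parametrix of $\begin{pmatrix}(A-\lambda)|_{\Dom_{\min}} & K(\lambda)\end{pmatrix}$ to a genuine inverse while preserving both the $\kappa$-twisted symbol estimates and the Taylor expansion structure at every step of the Neumann-series correction; in particular, one must verify that the composition of twisted symbols with their parametrices and remainders stays within the class. Fortunately, this bookkeeping is essentially the content of the construction in \cite[Section~5]{GKM2}, so the present proposition is in effect a repackaging of that construction into the compact symbolic form (i)--(iii), and no genuinely new ideas should be required beyond checking that each Neumann-correction respects both the $\kappa$-twisting and the Taylor expansion in $x$.
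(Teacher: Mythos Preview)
Your proposal is correct and follows the same approach as the paper: the paper itself does not give a proof of this proposition but simply states that it ``is a direct consequence of the parametrix construction for $A_\Dom-\lambda$ given in \cite[Section~5]{GKM2}.'' Your sketch is a faithful unpacking of that construction---the cut-off decomposition, the $\kappa$-twisted symbol arithmetic, the Taylor expansion in $x$ of the Mellin symbol, and the Neumann-series bookkeeping are exactly the ingredients of that reference.
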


Before we discuss the properties of the family $F(\lambda)$ introduced in \eqref{Flambda}, we need to make some identifications and introduce some notation.

Recall that $\Sigma$ is the subset of the boundary spectrum given by 
\[  \Sigma=\spec_b(A)\cap \{\sigma\in\C: -m/2<\Im\sigma<m/2\}. \]
For every $\sigma_0\in \Sigma$, the space $\Sing_{\wedge,\sigma_0}$ consists of singular functions of the form 
\begin{equation*}
 x^{i\sigma_0}\sum_{k=0}^{\mu_{\sigma_0}} c_{\sigma_0,k}(y)\log^k x 
 \;\text{ with } c_{\sigma_0,k}\in C^\infty(Y;E),
\end{equation*}
such that
\begin{equation*}
 \Dom_{\wedge,\max}/\Dom_{\wedge,\min} \cong \Sing_{\wedge,\max}= \bigoplus_{\sigma_0\in\Sigma} \Sing_{\wedge,\sigma_0}\subset C^\infty(\open Y^\wedge;E).
\end{equation*}
This identification is given by the map
\begin{equation*}
 u\mapsto (\omega u+\Dom_{\wedge,\min}): \Sing_{\wedge,\max}\to \Dom_{\wedge,\max}/\Dom_{\wedge,\min}
\end{equation*}
for an arbitrary cut-off function $\omega\in C_c^\infty([0,1))$. Without change of notation, we will identify maps on/to $ \Dom_{\wedge,\max}/\Dom_{\wedge,\min}$ with maps on/to $\Sing_{\wedge,\max}$.

For $\ell\in\N_0$ and $u\in \Sing_{\wedge,\sigma_0}$ let $\theta_{\ell}^{-1}$ be defined by
\begin{equation*}
  \theta_{\ell}^{-1}u=u+\sum_{k\in J_{\sigma_0,\ell}} \e_{\sigma_0,k}\,u,
\end{equation*}
where 
\begin{equation*}
\begin{gathered} 
 J_{\sigma_0,\ell}=\{k\in\N: \Im\sigma_0-k\ge -m/2-\ell\}, \\
\end{gathered}
\end{equation*}
and the $\e_{\sigma_0,k}$ are the operators defined in \eqref{eSigmak}. 
The map $\theta_{\ell}^{-1}$ extends to $\Sing_{\wedge,\max}$ in the obvious way. If we define
\begin{equation*}
 \Sing_{\max}^{(\ell)} = \theta_{\ell}^{-1}\big(\Sing_{\wedge,\max}\big) 
 \subset C^\infty(\open Y^\wedge;E),
\end{equation*}
we can then identify $\Dom_{\max}/\Dom_{\min}$ with $\Sing_{\max}^{(\ell)}$ via the commutative diagram
\begin{center}
\begin{picture}(80,70)
\put(0,55){$\Sing_{\max}^{(\ell)} \cong \Dom_{\max}/\Dom_{\min}$}
\put(10,18){\vector(0,1){30}}
\put(60,18){\vector(0,1){30}}
\put(-6,30){\footnotesize $\theta_{\ell}^{-1}$}
\put(64,30){\footnotesize $\theta^{-1}$}
\put(-8,5){$\Sing_{\wedge,\max} \cong \Dom_{\wedge,\max}/\Dom_{\wedge,\min}$}
\end{picture}
\end{center}
which gives the map
\begin{equation*}
 u\mapsto (\omega u+\Dom_{\min}): \Sing_{\max}^{(\ell)}\to \Dom_{\max}/\Dom_{\min}
\end{equation*}
for an arbitrary cut-off function $\omega\in C_c^\infty([0,1))$.

For $\sigma_0\in\Sigma$, $\vt\in\N$, and $\varrho>0$ let 
\begin{equation*}
 \e_{\sigma_0,\vt}(\varrho)=\varrho^{\vt}\kappa_{\varrho}^{-1}\e_{\sigma_0,\vt}\,\kappa_{\varrho}: \Sing_{\wedge,\sigma_0}\to C^\infty(\open Y^\wedge;E),
\end{equation*}
and let 
\begin{equation*}
 \tilde\Sing_{\sigma_0}= \theta^{-1}\big(\Sing_{\wedge,\sigma_0}\big)
 \subset \Dom_{\max}/\Dom_{\min}.
\end{equation*}
For $u\in \tilde\Sing_{\sigma_0}$ and a fixed cut-off function $\omega_0$, let
\begin{equation}\label{Lrho&Ktilde}
 L_\varrho^{(\ell)} u = \theta u + \sum_{\vt\in J_{\sigma_0,\ell}} \varrho^{-\vt} \e_{\sigma_0,\vt}(\varrho)(\theta u)
 \quad\text{and}\quad \Ka_\ell(\varrho)u=\kappa_\varrho(\omega_0 L_\varrho^{(\ell)}u).
\end{equation}
These operators extend to 
\begin{equation*}
 \Dom_{\max}/\Dom_{\min}=\bigoplus_{\sigma_0\in\Sigma} \tilde\Sing_{\sigma_0}
\end{equation*}
and take values in $C^\infty(\open Y^\wedge;E)$. However, for $\varrho\ge 1$, we will rather consider
\begin{equation*}
 \Ka_{\ell}(\varrho): \Dom_{\max}/\Dom_{\min}\to \Dom_{\max}. 
\end{equation*}
This is possible because of $\omega_0$. Moreover, observe that
\begin{align*}
 \kappa_\varrho L_\varrho^{(\ell)}u &= \kappa_\varrho\theta u + \sum_{\vt\in J_{\sigma_0,\ell}} \varrho^{-\vt} \kappa_\varrho \e_{\sigma_0,\vt}(\varrho)(\theta u) \\
 &= \kappa_\varrho\theta u + \sum_{\vt\in J_{\sigma_0,\ell}} \e_{\sigma_0,\vt}\, \kappa_\varrho\theta u\\
 &= \theta_{\ell}^{-1} (\kappa_\varrho\theta u).
\end{align*}
The map $\Ka_{\ell}(\varrho)$ is a lift of the action
\begin{equation}\label{kappatilde}
 \tilde \kappa_\varrho=\theta^{-1} \kappa_\varrho\theta:
 \Dom_{\max}/\Dom_{\min}\to \Dom_{\max}/\Dom_{\min}
\end{equation}
in the sense that the diagram

\begin{center}
\begin{picture}(100,95)
\put(100,85){$\Dom_{\max}$} 
\put(112,80){\vector(0,-1){30}}
\put(115,60){\footnotesize $q$}
\put(25,70){\footnotesize $\Ka_\ell(\varrho)$}
\put(10,48){\curve(0,0,82,32)}
\put(92,80){\vector(3,1){1}}
\put(-25,35){$\Dom_{\max}/\Dom_{\min}$}
\put(27,37){\vector(1,0){27}}
\put(25,25){\footnotesize $\kappa_\varrho L^{(\ell)}_\varrho$}
\put(60,35){$\Sing_{\max}^{(\ell)}\cong \Dom_{\max}/\Dom_{\min}$}
\put(-5,-5){\curve(10,30,55,23,100,30)}
\put(45,10){\footnotesize $\tilde\kappa_\varrho$}
\put(96,26){\vector(2,1){1}}
\end{picture}
\end{center}
commutes, where $q:\Dom_{\max}\to \Dom_{\max}/\Dom_{\min}$ is the quotient map.

\begin{proposition}\label{FStructure}
The family $F(\lambda)=[T(\lambda)(A-\lambda)]$ has the following properties:
\begin{enumerate}[$(i)$]
\item $F(\lambda) \in C^\infty(\Lambda_R,\L(\Dom_{\max}/\Dom_{\min},\C^d))$ and for every $\alpha,\beta\in\N_0$ we have
\begin{equation}\label{FSymbolEstimate}
\norm{\big(\partial_\lambda^\alpha \partial_{\bar\lambda}^\beta\, F(\lambda)\big)\tilde\kappa_{|\lambda|^{1/m}}} = O(|\lambda|^{\frac{\mu}{m}-\alpha-\beta}) \;\text{ as }\; |\lambda|\to\infty,
\end{equation}
with $\mu=0$;
\item for all $j\in\N_0$ there exist $n_j\in\N_0$ and $f_{jk}\in C^\infty(\Lambda\minus\{0\},\L(\Dom_{\max}/\Dom_{\min},\C^d))$, $k=0,\dots,n_j$, with
\begin{equation*}
 f_{jk}(\varrho^m\lambda)=\varrho^{-j}
 f_{jk}(\lambda)\tilde\kappa_\varrho^{-1} \;\text{ for every } \varrho>0,
\end{equation*} 
such that for every $N\in \N$, the difference
\begin{equation}\label{FAsympExp}
 F(\lambda)- \sum_{j=0}^{N-1}\sum_{k=0}^{n_j} f_{jk}(\lambda) \log^k|\lambda|
\end{equation}
satisfies \eqref{FSymbolEstimate} with $\mu=-N+\eps$ for any $\eps>0$. 
Here $n_0=0$, and 
\[ f_{00}(\lambda)=t_0(\lambda)(A_\wedge-\lambda)\theta \] 
with $t_0(\lambda)$ as in \eqref{TAsympExp}. If $A$ has coefficients independent of $x$ near $\partial M$, then $n_j=0$ for all $j$.
\end{enumerate}
\end{proposition}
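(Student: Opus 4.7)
The plan is to reduce the analysis of $F(\lambda)$ to that of its composition with the lift $\Ka_\ell(\varrho)$ introduced in \eqref{Lrho&Ktilde}. Since $\Ka_\ell(\varrho)$ lifts $\tilde\kappa_\varrho$ through the quotient map $q:\Dom_{\max}\to\Dom_{\max}/\Dom_{\min}$, we have the identity
\[
F(\lambda)\tilde\kappa_\varrho \;=\; T(\lambda)(A-\lambda)\Ka_\ell(\varrho), \qquad \varrho = |\lambda|^{1/m},
\]
valid for $\varrho \geq 1$. The symbol estimate \eqref{FSymbolEstimate} with $\mu=0$ and the log-polyhomogeneous expansion \eqref{FAsympExp} for $F(\lambda)$ will be read off from corresponding estimates and expansions for the right-hand side, after multiplying by $\tilde\kappa_\varrho^{-1}$. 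Smoothness of $F(\lambda)$ in $\lambda\in\Lambda_R$ as an operator into $\C^d$ is immediate from the smoothness of $T(\lambda)$ and the affine dependence of $A-\lambda$ on $\lambda$.

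The first key step is to expand $(A-\lambda)\Ka_\ell(\varrho)$ using the Taylor series $A \sim x^{-m}\sum_{\nu=0}^\infty x^\nu P_\nu$ near $Y$, where each $P_\nu \in \Diff_b^m(Y^\wedge;E)$ has coefficients independent of $x$. Since $\Ka_\ell(\varrho)v = \omega_0(\varrho x)\theta_\ell^{-1}(\kappa_\varrho\theta v)$ is supported in $\{x \leq c/\varrho\}$, multiplication by $x^\nu$ produces a gain of $\varrho^{-\nu}$ upon conjugation with $\kappa_\varrho$. The $\nu=0$ term reproduces $A_\wedge$, and combined with its $\kappa$-homogeneity $A_\wedge \kappa_\varrho = \varrho^m \kappa_\varrho A_\wedge$ and the relation $\kappa_\varrho L_\varrho^{(\ell)} = \theta_\ell^{-1}\kappa_\varrho\theta$ this yields the principal contribution $\varrho^m\kappa_\varrho(A_\wedge - \hat\lambda)\theta v$. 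Logarithms in the expansion originate from the identity
\[
\kappa_\varrho\bigl(x^{i\sigma_0}\log^k x\bigr) \;=\; \varrho^{i\sigma_0+m/2}(\log x + \log\varrho)^k x^{i\sigma_0}
\]
acting on singular functions in $\Sing_{\wedge,\sigma_0}$; these $\log\varrho$-polynomials then propagate through $\theta_\ell^{-1}$ and the $P_\nu$'s.

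Combining the resulting expansion of $(A-\lambda)\Ka_\ell(\varrho)$ with the expansion \eqref{TAsympExp} of $T(\lambda)$, whose components satisfy $t_j(\varrho^m\lambda) = \varrho^{-m-j}t_j(\lambda)\kappa_\varrho^{-1}$, the factors $\varrho^m$ and $\kappa_\varrho$ produced by $(A-\lambda)\Ka_\ell(\varrho)$ combine with the $\varrho^{-m-j}\kappa_\varrho^{-1}$ from $t_j$ to give total orders $\varrho^{-j}$. Regrouping by order and by power of $\log\varrho = \tfrac{1}{m}\log|\lambda|$ yields a formal expansion $F(\lambda)\tilde\kappa_\varrho \sim \sum_{j,k} f_{jk}(\lambda)\tilde\kappa_\varrho \log^k|\lambda|$; multiplying by $\tilde\kappa_\varrho^{-1}$ gives \eqref{FAsympExp}. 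The homogeneity $f_{jk}(\varrho^m\lambda) = \varrho^{-j}f_{jk}(\lambda)\tilde\kappa_\varrho^{-1}$ follows by tracking the $\kappa$-rescaling of each factor, and the $\eps > 0$ loss in the remainder estimate absorbs the finitely many $\log\varrho$ factors at each order. The leading term $f_{00}(\lambda) = t_0(\lambda)(A_\wedge-\lambda)\theta$ arises by matching principal terms on both sides.

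When $A$ has coefficients independent of $x$ near $\partial M$, the Taylor expansion terminates at $\nu=0$ and the defining recursion of the operators $\e_{\sigma_0,\vt}$ forces them to be identically zero for $\vt\geq 1$; then $L_\varrho^{(\ell)} = \theta$ and no $\log\varrho$ polynomial ever appears, giving $n_j=0$ for all $j$. The main obstacle I anticipate is the combinatorial bookkeeping in the general case: one must systematically track the interplay between (a) the Taylor coefficients $P_\nu$, (b) the polynomial-in-$\log\varrho$ contributions from $\kappa_\varrho$ acting on singular functions, and (c) the Mellin correction maps $\e_{\sigma_0,\vt}$ built from $\hat P_0(\sigma)^{-1}$, and then confirm that the extracted coefficients $f_{jk}(\lambda)$ genuinely extend to smooth $\tilde\kappa$-twisted homogeneous maps on $\Lambda\minus\{0\}$ with the claimed remainders.
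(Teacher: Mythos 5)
Your proposal follows essentially the same route as the paper's proof: the identity $F(\lambda)\tilde\kappa_\varrho = T(\lambda)(A-\lambda)\Ka_\ell(\varrho)$, the Taylor expansion of $A$ in $x$ near $Y$ conjugated by $\kappa_\varrho$, the observation that $\e_{\sigma_0,\vt}(\varrho)=\varrho^{\vt}\kappa_\varrho^{-1}\e_{\sigma_0,\vt}\kappa_\varrho$ is polynomial in $\log\varrho$ (the sole source of logarithms), the recombination with the homogeneous expansion of $t(\lambda)\kappa_\varrho$, and the vanishing of all $\e_{\sigma_0,\vt}$, $\vt\ge1$, in the constant-coefficient case. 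The ``bookkeeping'' you flag is exactly what the paper outsources to \cite[Lemmas~6.18 and~6.20]{GKM2} (the latter controlling the Taylor remainder $x^{\ell+1}\tilde P_\ell$ after conjugation), so your outline is correct and complete modulo those citations.
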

\begin{proof}
For $\lambda\in\Lambda_R$ let $\hat\lambda=\lambda/|\lambda|$. As a first step, we will show the existence of functions $C_{jk}\in C^\infty(S^1\cap\Lambda, \L(\Dom_{\max}/\Dom_{\min},\C^d))$ such that
\begin{equation*}
 F(\lambda)\tilde\kappa_{|\lambda|^{1/m}} \sim t_0(\hat\lambda)(A_\wedge-\hat\lambda)\theta + \sum_{j=1}^\infty |\lambda|^{-j/m}\sum_{k=0}^{n_j} C_{jk}(\hat\lambda)\log^k |\lambda|.
\end{equation*}
This asymptotic expansion will directly lead to the claimed properties of $F(\lambda)$ with coefficients $f_{jk}$ defined by
\begin{equation*}
 f_{jk}(\lambda)=|\lambda|^{-j/m} C_{jk}(\hat\lambda)\tilde\kappa_{|\lambda|^{1/m}}^{-1}
 \quad\text{for } \lambda\in\Lambda\minus\{0\}.
\end{equation*}
Observe that, since $F(\lambda)$ is tempered, it suffices to check \eqref{FSymbolEstimate} only for $\alpha=\beta=0$. Also, since $\Dom_{\max}/\Dom_{\min}= \bigoplus_{\sigma_0\in\Sigma} \tilde\Sing_{\sigma_0}$, it is sufficient to expand the restriction of $F(\lambda)\tilde \kappa_{|\lambda|^{1/m}}$ to $\tilde\Sing_{\sigma_0}$ for every $\sigma_0\in\Sigma$. 

Let $\varrho=|\lambda|^{1/m}$, $\ell\in\N$, $\sigma_0\in\Sigma$, and write
\begin{equation*}
 F(\lambda)\tilde \kappa_\varrho = T(\lambda)(A-\lambda)\Ka_\ell(\varrho): \tilde\Sing_{\sigma_0}\to \C^d.
\end{equation*}
We start by expanding the operator family
\begin{align*} 
\kappa_\varrho^{-1}(A-\lambda)\Ka_\ell(\varrho)= \kappa_\varrho^{-1}A \kappa_\varrho\omega_0 L^{(\ell)}_\varrho -\lambda\omega_0 L^{(\ell)}_\varrho. 
\end{align*}
In \cite[Lemma~6.18]{GKM2} we proved that for every $\vt\in J_{\sigma_0,0}$ and $\psi\in \Sing_{\wedge,\sigma_0}$ there is a polynomial $q_{\vt}(y,\log x,\log\varrho)$ in $(\log x, \log\varrho)$ with coefficients in $C^\infty(Y;E)$ such that
\[ \e_{\sigma_0,\vt}(\varrho)(\psi) = q_{\vt}(y,\log x,\log\varrho) x^{i(\sigma_0-i\vt)}. \]
In fact, there are operators $c_{\sigma_0,\vt,k}\in\L(\Sing_{\wedge,\sigma_0},C^\infty(\open Y^\wedge;E))$, and $\mu_\vt\in\N$, such that
\begin{equation}\label{esigmatheta}
 \e_{\sigma_0,\vt}(\varrho)=\sum_{k=0}^{\mu_\vt} c_{\sigma_0,\vt,k} \log^k\varrho
\end{equation}
for every $\vt\in J_{\sigma_0,\ell}$. Therefore, on $\tilde\Sing_{\sigma_0}$,
\begin{equation}\label{LrhoExpansion}
\lambda \omega_0 L^{(\ell)}_\varrho = \varrho^m\Big(\hat\lambda\omega_0\theta + \sum_{\vt\in J_{\sigma_0,\ell}} \varrho^{-\vt} \sum_{k=0}^{\mu_\vt} (\hat\lambda \omega_0 c_{\sigma_0,\vt,k}\,\theta) \log^k\varrho \Big).
\end{equation}
We split (near the boundary)
\begin{equation*}
 A=x^{-m}\sum_{\nu=0}^{\ell+m} P_\nu x^\nu + x^{\ell+1}\tilde P_\ell,
\end{equation*}
where $\tilde P_\ell\in \Diff_b^m(Y^\wedge;E)$, and where each $P_\nu\in \Diff_b^m(Y^\wedge;E)$ has coefficients independent of $x$. In particular, $\kappa_\varrho^{-1} P_\nu x^\nu \kappa_\varrho = \varrho^{-\nu} P_\nu x^\nu$, so 
\begin{equation*}
 \kappa_\varrho^{-1} A \kappa_\varrho=x^{-m}\sum_{\nu=0}^{\ell+m}\varrho^{m-\nu} P_\nu x^\nu + \varrho^{-\ell-1}x^{\ell+1} \kappa_\varrho^{-1}\tilde P_\ell\kappa_\varrho.
\end{equation*}
Now, as in the proof of \cite[Lemma~6.20]{GKM2}, one can show that the family of operators $\kappa_\varrho^{-1}\tilde P_\ell\kappa_\varrho\omega_0 L^{(\ell)}_{\varrho}$ is $O(1)$ in $\L(\tilde\Sing_{\sigma_0},x^{-m/2}L_b^2)$ as $\varrho\to\infty$. Therefore, modulo a remainder that is $o(\varrho^{-\ell})$ as $\varrho\to\infty$, we have
\begin{equation*}
\begin{split}
\big(\kappa_\varrho^{-1} A\kappa_{\varrho}\big) & \omega_0 L^{(\ell)}_{\varrho} \\
&\equiv x^{-m}\sum_{\nu=0}^{\ell+m}\varrho^{m-\nu} P_\nu x^\nu {\omega_0}L^{(\ell)}_{\varrho} \\
&= \varrho^m \Bigl(x^{-m}\sum_{\nu=0}^{\ell+m}\varrho^{-\nu} P_\nu x^\nu\Bigr) \omega_0 \Bigl(\theta + \sum_{\vt\in J_{\sigma_0,\ell}} \varrho^{-\vt} \e_{\sigma_0,\vt}(\varrho) \theta\Bigr) \\
&\equiv \varrho^m \Bigl(A_\wedge\omega_0 \theta + \sum_{j=1}^{\ell} \varrho^{-j} \Bigl(\sum_{\nu+\vt=j} x^{-m} P_\nu x^\nu \omega_0 \e_{\sigma_0,\vt}(\varrho)\theta \Bigr) \Bigr).
\end{split}
\end{equation*}
Because of \eqref{esigmatheta} and \eqref{LrhoExpansion}, there are $D_{jk}\in C^\infty(S^1\cap\Lambda, \L(\tilde\Sing_{\sigma_0},\K^{\infty,-m/2}))$ and $n_j\in\N_0$ such that
\begin{equation}\label{AKaExpansion}
 \kappa_\varrho^{-1}(A-\lambda)\Ka_\ell(\varrho) \equiv
 \varrho^m \Bigl((A_\wedge-\hat\lambda)\omega_0 \theta + \sum_{j=1}^{\ell} \varrho^{-j}\sum_{k=0}^{n_j} D_{jk}(\hat\lambda) \log^k|\lambda|\Bigr)
\end{equation}
modulo $o(\varrho^{-\ell})$ as $\varrho\to\infty$. The functions $D_{jk}$ are independent of the given $\ell$.

To complete the expansion of $F(\lambda)\tilde\kappa_\varrho$, let $\omega$ be an arbitrary  cut-off function and let $t(\lambda)=T(\lambda)\omega$. Since $T(\lambda)(1-\omega)$ is rapidly decreasing as $|\lambda|\to\infty$, for every $N\in\N$ we have
\begin{equation}\label{TAlambdaKExpansion}
\begin{split}
 T(\lambda)(A-\lambda)\Ka_\ell(\varrho) &= t(\lambda)(A-\lambda)\Ka_\ell(\varrho) + o(\varrho^{-N})\\
 &= \big(t(\lambda)\kappa_\varrho\big)\big(\kappa_\varrho^{-1}(A-\lambda)\Ka_\ell(\varrho)\big) + o(\varrho^{-N})
\end{split}
\end{equation}
as $\varrho\to\infty$. Now, by Proposition~\ref{Tlambda}, $t(\lambda)\kappa_\varrho$ admits an asymptotic expansion
\begin{equation*}
 t(\lambda)\kappa_\varrho \sim \sum_{j=0}^\infty t_j(\lambda)\kappa_\varrho = 
 \sum_{j=0}^\infty \varrho^{-m-j} t_j(\hat\lambda).
\end{equation*}
This expansion, combined with \eqref{AKaExpansion} and \eqref{TAlambdaKExpansion} for $\ell$ sufficiently large, gives the desired expansion of $F(\lambda)\tilde\kappa_{|\lambda|^{1/m}}$. Observe that, on $\Dom_{\max}/\Dom_{\min}$,
\begin{equation*}
t_0(\hat\lambda)(A_\wedge-\hat\lambda)\omega_0 \theta = t_0(\hat\lambda)(A_\wedge-\hat\lambda)\theta, 
\end{equation*}
and therefore $f_{00}(\lambda)= t_0(\hat\lambda)(A_\wedge-\hat\lambda)\theta \tilde\kappa_\varrho^{-1} = t_0(\lambda)(A_\wedge-\lambda)\theta$.

If $A$ has coefficients independent of $x$ near $\partial M$, then $\e_{\sigma_0,k}=0$ for every $\sigma_0$ and all $k\in\N$, $\theta_\ell^{-1}$ and $L_\varrho^{(\ell)}$ are the identity map, and $\Ka_\ell(\varrho)u=\kappa_\varrho\omega_0 u$. Thus
\begin{equation*}
 \kappa_\varrho^{-1}(A-\lambda)\Ka_\ell(\varrho) =
 \varrho^m (A_\wedge-\hat\lambda)\omega_0  
\end{equation*}
as opposed to \eqref{AKaExpansion}. Consequently, the expansion of $F(\lambda)$ given in \eqref{FAsympExp} has no $\log$ terms in this case. In other words, $n_j=0$ for all $j$.
\end{proof}

The following proposition gives the existence and structure of $F_\Dom(\lambda)^{-1}$ under the assumption that $\Dom$ is stationary. The nonstationary case will be treated in \cite{GKM5b}. 

\begin{proposition}\label{FInvStructure}
Under our general assumptions, there exists $R>0$ such that $F_\Dom(\lambda)=F(\lambda)|_{\Dom/\Dom_{\min}}$ is invertible for all $\lambda\in\Lambda_R$. The family $F_\Dom(\lambda)^{-1}$ has the following properties:
\begin{enumerate}[$(i)$]
\item $F_\Dom(\lambda)^{-1} \in C^\infty(\Lambda_R;\L(\C^d,\Dom_{\max}/\Dom_{\min}))$ and for every $\alpha,\beta\in\N_0$ we have
\begin{equation}\label{FInvSymbolEstimate}
\norm{\tilde\kappa_{|\lambda|^{1/m}}^{-1}\partial_\lambda^\alpha \partial_{\bar\lambda}^\beta\, F_\Dom(\lambda)^{-1}} = O(|\lambda|^{\frac{\mu}{m}-\alpha-\beta}) \;\text{ as }\; |\lambda|\to\infty,
\end{equation}
with $\mu=0$;
\item for all $j\in\N_0$ there exist $m_j\in\N_0$ and $\phi_{jk}\in C^\infty(\Lambda\minus\{0\},\L(\C^d,\Dom_{\max}/\Dom_{\min}))$, $k=0,\dots,m_j$, with
\begin{equation*}
 \phi_{jk}(\varrho^m\lambda)=\varrho^{-j}\tilde\kappa_\varrho
 \phi_{jk}(\lambda) \;\text{ for every } \varrho>0,
\end{equation*} 
such that for every $N\in \N$, the difference
\begin{equation}\label{FInvAsympExp}
 F_\Dom(\lambda)^{-1}- \sum_{j=0}^{N-1}\sum_{k=0}^{m_j} \phi_{jk}(\lambda) \log^k|\lambda|
\end{equation}
satisfies \eqref{FInvSymbolEstimate} with $\mu=-N+\eps$ for any $\eps>0$. Here $m_0=0$, and
\[ \phi_{00}(\lambda) = \big(f_{00}(\lambda)|_{\Dom/\Dom_{\min}}\big)^{-1} \]
with $f_{00}(\lambda)$ as in \eqref{FAsympExp}. If $A$ has coefficients independent of $x$ near $\partial M$, then $m_j=0$ for all $j$.
\end{enumerate}
\end{proposition}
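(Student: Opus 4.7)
The plan is to deduce invertibility and the asymptotic expansion of $F_\Dom(\lambda)^{-1}$ from the log-polyhomogeneous expansion of $F_\Dom(\lambda)$ given in Proposition~\ref{FStructure} by a Neumann argument applied to the $\tilde\kappa$-rescaled family. The stationarity hypothesis enters through the fact that $\tilde\kappa_\varrho=\theta^{-1}\kappa_\varrho\theta$ preserves $\Dom/\Dom_{\min}$, which is immediate from $\theta(\Dom/\Dom_{\min})=\Dom_\wedge/\Dom_{\wedge,\min}$ being $\kappa$-invariant.

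First I would show that for every $\lambda\in\Lambda\minus\{0\}$ the leading term $f_{00}(\lambda)|_{\Dom/\Dom_{\min}}$ is invertible. After transport through $\theta$ this restriction is precisely the wedge symbol $F_\wedge(\lambda)|_{\Dom_\wedge/\Dom_{\wedge,\min}}$, which as recorded at the end of Section~\ref{sec:ResolventStructure} is invertible if and only if $A_{\wedge,\Dom_\wedge}-\lambda$ is invertible. By stationarity together with the sector of minimal growth assumption (see the remark preceding Theorem~\ref{ResolventStructure}), this holds for all $\lambda\in\Lambda\minus\{0\}$. The $\tilde\kappa$-homogeneity from Proposition~\ref{FStructure} gives $f_{00}(\lambda)\tilde\kappa_{|\lambda|^{1/m}}=f_{00}(\hat\lambda)$ with $\hat\lambda=\lambda/|\lambda|$, so the inverse of $f_{00}(\hat\lambda)|_{\Dom/\Dom_{\min}}$ is smooth and uniformly bounded over the compact set $S^1\cap\Lambda$.

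Next, part (i) follows by Neumann inversion of
\begin{equation*}
\widetilde F(\lambda):=F_\Dom(\lambda)\,\tilde\kappa_{|\lambda|^{1/m}}\big|_{\Dom/\Dom_{\min}}=f_{00}(\hat\lambda)\big|_{\Dom/\Dom_{\min}}+r(\lambda),
\end{equation*}
where by \eqref{FAsympExp} the remainder satisfies $\|r(\lambda)\|=O(|\lambda|^{-1/m+\eps})$ as $|\lambda|\to\infty$. Choosing $R$ so large that $r(\lambda)$ composed with the uniformly bounded inverse of the leading term has operator norm less than one for $|\lambda|\ge R$ yields $\widetilde F(\lambda)^{-1}\in C^\infty(\Lambda_R,\L(\C^d,\Dom/\Dom_{\min}))$, uniformly bounded. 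Because $\tilde\kappa_{|\lambda|^{1/m}}$ preserves $\Dom/\Dom_{\min}$, we obtain $F_\Dom(\lambda)^{-1}=\tilde\kappa_{|\lambda|^{1/m}}\widetilde F(\lambda)^{-1}$, which is \eqref{FInvSymbolEstimate} for $\alpha=\beta=0$. Higher-derivative bounds follow by differentiating the Neumann identity and using the symbol estimates for $F(\lambda)$ from Proposition~\ref{FStructure}.

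For (ii), I would formally invert the log-polyhomogeneous expansion of $\widetilde F(\lambda)$ by the standard recursive Neumann scheme to produce coefficients $\psi_{jk}\in C^\infty(S^1\cap\Lambda,\L(\C^d,\Dom/\Dom_{\min}))$ with $\psi_{00}(\hat\lambda)=(f_{00}(\hat\lambda)|_{\Dom/\Dom_{\min}})^{-1}$ whose partial sums in $\sum_{j,k}\psi_{jk}(\hat\lambda)|\lambda|^{-j/m}\log^k|\lambda|$ approximate $\widetilde F(\lambda)^{-1}$ to the required order. Setting $\phi_{jk}(\lambda)=|\lambda|^{-j/m}\tilde\kappa_{|\lambda|^{1/m}}\psi_{jk}(\hat\lambda)$ produces a family satisfying the twisted homogeneity $\phi_{jk}(\varrho^m\lambda)=\varrho^{-j}\tilde\kappa_\varrho\phi_{jk}(\lambda)$, the identity $\phi_{00}(\lambda)=(f_{00}(\lambda)|_{\Dom/\Dom_{\min}})^{-1}$, and, via the remainder bound for $\widetilde F(\lambda)$ minus its $N$-th partial sum, the estimate in \eqref{FInvAsympExp}. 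The main technical burden is verifying that the class of log-polyhomogeneous symbols is stable under Neumann inversion and keeping track of how the maximal log power $m_j$ grows with $j$; this is a careful but routine bookkeeping exercise within the calculus of Proposition~\ref{FStructure}. When $A$ has coefficients independent of $x$ near $\partial M$, Proposition~\ref{FStructure} gives $n_j=0$ for all $j$, so $\widetilde F(\lambda)$ is purely polyhomogeneous and its Neumann inverse remains in this subclass, forcing $m_j=0$ for all $j$.
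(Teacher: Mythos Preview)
Your proposal is correct and follows essentially the same approach as the paper. The paper right-multiplies $F_\Dom(\lambda)$ by $\phi_{00}(\lambda)=(f_{00}(\lambda)|_{\Dom/\Dom_{\min}})^{-1}$ to obtain $\Phi(\lambda)\sim 1+\text{lower order}$, whereas you right-multiply by $\tilde\kappa_{|\lambda|^{1/m}}$; since stationarity gives $\phi_{00}(\lambda)=\tilde\kappa_{|\lambda|^{1/m}}\phi_{00}(\hat\lambda)$, the two differ only by the uniformly bounded invertible factor $\phi_{00}(\hat\lambda)$, and the Neumann inversion and definition of the $\phi_{jk}$ proceed identically.
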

\begin{proof}
As explained in Section~\ref{sec:ResolventStructure}, the invertibility of $A_{\wedge,\Dom_{\wedge}}-\lambda$ is equivalent to the invertibility of $F_\wedge(\lambda)$ on $\Dom_\wedge/\Dom_{\wedge,\min}$. Thus the restriction of $f_{00}(\lambda) = F_\wedge(\lambda)\theta$ to $\Dom/\Dom_{\min}$ is invertible. Let $\phi_{00} = \big(f_{00}(\lambda)|_{\Dom/\Dom_{\min}}\big)^{-1}$ and let
\begin{equation*}
 \Phi(\lambda)=F(\lambda)\phi_{00}(\lambda)=F_{\Dom}(\lambda)\phi_{00}(\lambda).
\end{equation*}
By Proposition~\ref{FStructure}, the family $F_{\Dom}(\lambda)$ admits the asymptotic expansion
\begin{equation*}
 F_{\Dom}(\lambda)\sim f_{00}(\lambda)|_{\Dom/\Dom_{\min}}+\sum_{j=1}^{\infty}\sum_{k=0}^{n_j} f_{jk}(\lambda)|_{\Dom/\Dom_{\min}} \log^k|\lambda|
\end{equation*}
in the sense that for every $N\in\N$, the difference \eqref{FAsympExp} satisfies the estimate  \eqref{FSymbolEstimate} with $\mu=-N+\eps$ for any $\eps>0$. 

Now, if $\varrho=|\lambda|^{1/m}$ and $\hat\lambda=\lambda/|\lambda|$, the $\tilde\kappa$-homogeneity of $f_{jk}$ implies
\begin{equation*}
 f_{jk}(\lambda)=f_{jk}(\varrho^m\hat\lambda)=\varrho^{-j}f_{jk}(\hat\lambda) \tilde\kappa_\varrho^{-1}. 
\end{equation*}
Moreover, since $\Dom$ is stationary and  $f_{00}$ is $\tilde\kappa$-homogeneous, we also have
\begin{equation*}
\phi_{00}(\lambda) = \tilde\kappa_\varrho \phi_{00}(\hat\lambda).
\end{equation*}
Thus $f_{jk}(\lambda)\phi_{00}(\lambda) = |\lambda|^{-j/m} f_{jk}(\hat\lambda)\phi_{00}(\hat\lambda)$, and so 
\begin{equation*}
 \Phi(\lambda)\sim 1+\sum_{j=1}^{\infty}|\lambda|^{-j/m} \sum_{k=0}^{n_j} f_{jk}(\hat\lambda)\phi_{00}(\hat\lambda) \log^k|\lambda|.
\end{equation*}
Hence there exist $m_j\in\N_0$, $m_j\le n_j$, and $D_{jk}:S^1\cap\Lambda\to \L(\C^d,\Dom_{\max}/\Dom_{\min})$, $k=0,\dots,m_j$, such that
\begin{equation*}
 \Phi(\lambda)^{-1}\sim 1+\sum_{j=1}^{\infty}|\lambda|^{-j/m} \sum_{k=0}^{m_j} D_{jk}(\hat\lambda)\log^k|\lambda|.
\end{equation*}
Since $F_{\Dom}(\lambda)^{-1}=\phi_{00}(\lambda)\Phi(\lambda)^{-1}$, we define 
\begin{equation*}
 \phi_{jk}(\lambda)= |\lambda|^{-j/m} \phi_{00}(\lambda) D_{jk}(\hat\lambda) \;
 \text{ for $j\in\N$ and $k\in\{0,\dots,m_j\}$}.
\end{equation*}
These functions are $\tilde\kappa$-homogeneous of order $-j$ and
\begin{equation*}
 F_{\Dom}(\lambda)^{-1}\sim \phi_{00}(\lambda)+\sum_{j=1}^{\infty} \sum_{k=0}^{m_j} \phi_{jk}(\lambda)\log^k|\lambda|
\end{equation*}
in the sense that, for every $N\in\N$, the difference \eqref{FInvAsympExp} satisfies \eqref{FInvSymbolEstimate} for $\alpha=\beta=0$ with $\mu=-N+\eps$ for any $\eps>0$. The corresponding estimates for $\alpha, \beta\in\N$ follow immediately. Observe that if $n_j=0$ for all $j$, then $m_j=0$ for all $j$.
\end{proof}

\begin{proposition}\label{PStructure}
Let $\vp\in C^\infty(M;\textup{End}(E))$ and let $\omega\in C_c^\infty([0,1))$ be an arbitrary cut-off function.  The family $P(\lambda)=\vp[1-B(\lambda)(A-\lambda)]$, interpreted as in \eqref{Plambda}, has the following properties:
\begin{enumerate}[$(i)$]
\item For every $s\in\R$ we have $(1-\omega)P(\lambda)\in \S(\Lambda_R,\L(\Dom_{\max}/\Dom_{\min},x^{-m/2}H_b^s))$ and $\omega P(\lambda) \in C^\infty(\Lambda_R,\L(\Dom_{\max}/\Dom_{\min},\K^{s,-m/2}))$;
\end{enumerate}
in addition, with $p(\lambda)=\omega P(\lambda)$, 
\begin{enumerate}[$(i)$] \stepcounter{enumi}
\item for every $\alpha,\beta\in\N_0$ we have
\begin{equation}\label{pSymbolEstimate}
\norm{\kappa_{|\lambda|^{1/m}}^{-1}\big(\partial_\lambda^\alpha \partial_{\bar\lambda}^\beta\, p(\lambda)\big)\tilde\kappa_{|\lambda|^{1/m}}} = O(|\lambda|^{\frac{\mu}{m}-\alpha-\beta}) \;\text{ as }\; |\lambda|\to\infty,
\end{equation}
with $\mu=0$;
\item for every $j\in\N_0$ there are $m_j\in\N_0$ and operator-valued functions $p_{jk}\in C^\infty(\Lambda\minus\{0\},\L(\Dom_{\max}/\Dom_{\min},\K^{s,-m/2}))$, $k=0,\dots,m_j$, with
\begin{equation*}
 p_{jk}(\varrho^m\lambda)=\varrho^{-j}\kappa_\varrho
 p_{jk}(\lambda)\tilde\kappa_{\varrho}^{-1} \;\text{ for every } \varrho>0,
\end{equation*} 
such that for every $N\in \N$, the difference
\begin{equation}\label{pAsympExp}
 p(\lambda)- \sum_{j=0}^{N-1}\sum_{k=0}^{m_j} p_{jk}(\lambda) \log^k|\lambda|
\end{equation}
satisfies \eqref{pSymbolEstimate} with $\mu=-N+\eps$ for any $\eps>0$. Here $m_0=0$ and 
\[ p_{00}(\lambda)= \vp_0[1-B_\wedge(\lambda)(A_\wedge-\lambda)]\theta. \]
If $A$ has coefficients independent of $x$ near $\partial M$, then $m_j=0$ for all $j$.
\end{enumerate}
\end{proposition}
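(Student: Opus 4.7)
The proof follows closely the template of Proposition~\ref{FStructure}. Since by Theorem~\ref{ResolventStructure} the family $1-B(\lambda)(A-\lambda)$ vanishes on $\Dom_{\min}$, $P(\lambda)$ descends to $\Dom_{\max}/\Dom_{\min}$. The plan is to represent elements of the quotient via the lift $\Ka_\ell(\varrho):\Dom_{\max}/\Dom_{\min}\to\Dom_{\max}$ from \eqref{Lrho&Ktilde}, with $\varrho=|\lambda|^{1/m}$ and $\ell$ eventually taken large enough, and to conjugate the resulting expression with $\kappa_\varrho$. Since $\Ka_\ell(\varrho)v$ is supported in $\{x\lesssim 1/\varrho\}$ (because of the factor $\omega_0(\varrho x)$ in its definition), the off-diagonal piece $(1-\omega)P(\lambda)$ reduces to a composition of $(1-\omega)\vp B(\lambda)\omega_0(\varrho x)$ with $(A-\lambda)\Ka_\ell(\varrho)$; for $\varrho$ large the cut-offs $(1-\omega)$ and $\omega_0(\varrho x)$ have disjoint supports, so Lemma~\ref{RapDecreasingTerms} together with the tempered behavior of $(A-\lambda)\Ka_\ell(\varrho)$ in $\varrho$ yields item~(i).

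For the boundary part $p(\lambda)=\omega P(\lambda)$, the identity $\Ka_\ell(\varrho)=\kappa_\varrho\omega_0 L^{(\ell)}_\varrho$ gives
\begin{equation*}
\kappa_\varrho^{-1} p(\lambda)\tilde\kappa_\varrho = \bigl(\kappa_\varrho^{-1}\omega\vp\kappa_\varrho\bigr)\omega_0 L^{(\ell)}_\varrho - \bigl(\kappa_\varrho^{-1}\omega\vp B(\lambda)\kappa_\varrho\bigr)\bigl(\kappa_\varrho^{-1}(A-\lambda)\Ka_\ell(\varrho)\bigr).
\end{equation*}
Three ingredients then produce the expansion \eqref{pAsympExp}. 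First, in local coordinates $\kappa_\varrho^{-1}\vp\kappa_\varrho=\vp(x/\varrho,y)$ Taylor-expands at $x=0$ as $\sum_\nu \varrho^{-\nu}x^\nu\vp_\nu(y)$; combined with \eqref{esigmatheta} this expands the first summand as $\sum \varrho^{-j}\log^k\varrho\,(\cdots)$ with the required $\tilde\kappa$-homogeneity. Second, because $\Ka_\ell(\varrho)$ is supported where $\omega_0(\varrho x)$ is, an extra cut-off $\omega_1(\varrho x)$ may be inserted between $B(\lambda)$ and $(A-\lambda)\Ka_\ell(\varrho)$ at the cost of a Schwartz remainder (Lemma~\ref{RapDecreasingTerms}); Lemma~\ref{BSymbolExpansion} with $\ell=1$ then expands $\kappa_\varrho^{-1}\omega\vp B(\lambda)\omega_1(\varrho x)\kappa_\varrho$, and \eqref{AKaExpansion} expands $\kappa_\varrho^{-1}(A-\lambda)\Ka_\ell(\varrho)$; the Cauchy product of the two expansions gives the second summand. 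Third, the $\partial_\lambda^\alpha\partial_{\bar\lambda}^\beta$ estimates \eqref{pSymbolEstimate} come termwise from the corresponding estimates in Proposition~\ref{Tlambda} and Lemma~\ref{BSymbolExpansion}, noting that $P(\lambda)$ is tempered a priori and each $\lambda$-derivative drops the symbolic order by one.

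Collecting leading coefficients gives $p_{00}(\lambda)=\vp_0\theta-\vp_0 B_\wedge(\lambda)(A_\wedge-\lambda)\theta=\vp_0[1-B_\wedge(\lambda)(A_\wedge-\lambda)]\theta$: the $\nu=0$ Taylor term of $\vp$ contributes $\vp_0$, the $\vt=0$ term of $L^{(\ell)}_\varrho$ contributes $\theta$, and the leading factors from Lemma~\ref{BSymbolExpansion} and \eqref{AKaExpansion} contribute $B_\wedge(\lambda)$ and $(A_\wedge-\lambda)\theta$ respectively (the cut-off $\omega_0$ being harmlessly absorbed modulo $\Dom_{\wedge,\min}$). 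When $A$ has coefficients independent of $x$ near $\partial M$, the operators $\e_{\sigma_0,k}$ vanish for $k\ge 1$, $L^{(\ell)}_\varrho$ is the identity, and \eqref{AKaExpansion} collapses to its leading term (cf.\ the end of the proof of Proposition~\ref{FStructure}); the same assumption on $\vp$ (cf.\ Remark~\ref{AandPhiConstantCoeff}) removes the Taylor expansion of $\vp$ and hence all logarithmic contributions, giving $m_j=0$ for every $j$. The main technical obstacle is the identification of $p_{00}$: one must match three separate asymptotic expansions and observe that the structural identity $1-B_\wedge(\lambda)(A_\wedge-\lambda)$ surfaces precisely from the wedge symbol of $1-B(\lambda)(A-\lambda)$; the combinatorial bookkeeping of log-power terms in the Cauchy product is tedious but parallel to the analogous step in Proposition~\ref{FStructure}.
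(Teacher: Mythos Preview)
Your approach is essentially the one in the paper: lift via $\Ka_\ell(\varrho)$, conjugate by $\kappa_\varrho$, and expand the two summands using the Taylor expansion of $\vp$, the expansion of $L^{(\ell)}_\varrho$ from \eqref{esigmatheta}, Lemma~\ref{BSymbolExpansion}, and \eqref{AKaExpansion}. The identification of $p_{00}$ is correct.

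Two corrections are needed. First, the insertion of $\omega_1(\varrho x)$ between $B(\lambda)$ and $(A-\lambda)\Ka_\ell(\varrho)$ is \emph{exact}, not ``at the cost of a Schwartz remainder'': since $\Ka_\ell(\varrho)=\kappa_\varrho\omega_0 L^{(\ell)}_\varrho$ and $A$ is local, the image of $(A-\lambda)\Ka_\ell(\varrho)$ is supported where $\omega_0(\varrho x)\ne 0$, hence where $\omega_1(\varrho x)=1$ (using $\omega_0\prec\omega_1$). Lemma~\ref{RapDecreasingTerms} is not the mechanism here.

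Second, and more substantively, your final paragraph does not prove the last assertion of the proposition. The statement requires only that $A$ have coefficients independent of $x$; no hypothesis on $\vp$ is imposed. Your argument invokes an additional assumption on $\vp$ to kill its Taylor expansion, but that expansion contributes only pure powers $\varrho^{-\nu}$, never logarithms. The logarithms arise \emph{solely} from the terms $\e_{\sigma_0,\vt}(\varrho)$ via \eqref{esigmatheta}, which enter both $L^{(\ell)}_\varrho$ and \eqref{AKaExpansion}; once $A$ has constant coefficients these all vanish and $L^{(\ell)}_\varrho=\theta$, so $m_j=0$ regardless of $\vp$. As written you have proved only a weaker statement than the proposition claims; drop the clause about $\vp$ and observe that neither the Taylor expansion of $\vp$ nor the expansion from Lemma~\ref{BSymbolExpansion} produces any $\log|\lambda|$.
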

\begin{proof}
Let $\omega_0$ and $\omega_1$ be cut-off functions such that $\omega_0\prec \omega_1\prec \omega$. Since the operator $(1-B(\lambda)(A-\lambda))$ vanishes on $\Dom_{\min}$ for $\lambda\in\Lambda_R$, and since $A$ is local, we have 
\begin{align*}
 (1-\omega)\vp(1-B(\lambda)(A-\lambda))&= (1-\omega)\vp(1-B(\lambda)(A-\lambda))\omega_0 \\
 &= -\vp\big[(1-\omega) B(\lambda)\omega_1\big] (A-\lambda)\omega_0.
\end{align*}
The fact that $(1-\omega) B(\lambda)\omega_1$ belongs to $\S(\Lambda_R,\L(x^{-m/2}L_b^2,x^{-m/2}H_b^s))$ for every $s\in\R$ implies the claimed decay of $(1-\omega)P(\lambda)$ as $|\lambda|\to\infty$.

Let $\varrho=|\lambda|^{1/m}$, $\ell\in\N$, and assume $\varrho\ge 1$. As in the proof of Proposition~\ref{FStructure} we can use the lift $\Ka_{\ell}(\varrho)=\kappa_\varrho\omega_0 L^{(\ell)}_\varrho$, see \eqref{Lrho&Ktilde}, and write
\begin{equation*}
 p(\lambda) \tilde\kappa_{\varrho}= \omega\vp\big(1-B(\lambda)(A-\lambda)\big)\Ka_{\ell}(\varrho): 
\Dom_{\max}/\Dom_{\min}\to \K^{s,-m/2}(Y^\wedge;E).
\end{equation*}
Then, since $(1-\omega_1)\omega_0=0$, we have
\begin{equation*}
\begin{split}
 \kappa_{\varrho}^{-1} p(\lambda) \tilde\kappa_{\varrho}
 &= \kappa_{\varrho}^{-1}\omega\vp \Ka_{\ell}(\varrho) - \kappa_{\varrho}^{-1}\omega\vp B(\lambda)(A-\lambda) \Ka_{\ell}(\varrho) \\
 &= \kappa_{\varrho}^{-1}\big[\omega\vp\omega_0(x\varrho)\big] \kappa_\varrho L^{(\ell)}_\varrho - \kappa_{\varrho}^{-1}\big[\omega\vp B(\lambda)\omega_1(x\varrho)\big]\kappa_\varrho \kappa_{\varrho}^{-1}(A-\lambda) \Ka_{\ell}(\varrho). 
\end{split}
\end{equation*}
Setting $q_0(\lambda)=\omega\vp\,\omega_0(x|\lambda|^{1/m})$ and $q_1(\lambda)= \omega\vp B(\lambda)\omega_1(x|\lambda|^{1/m})$, we get
\begin{equation*}
\kappa_{\varrho}^{-1} p(\lambda) \tilde\kappa_{\varrho}= 
\big(\kappa_{\varrho}^{-1}q_0(\lambda) \kappa_\varrho\big) L^{(\ell)}_\varrho - \big(\kappa_{\varrho}^{-1}q_1(\lambda)\kappa_\varrho\big) \kappa_{\varrho}^{-1}(A-\lambda) \Ka_{\ell}(\varrho). 
\end{equation*}

We start by expanding $q_0(\lambda)$. Since $(1-\omega)\omega_0(x\varrho)=0$ for $\varrho\ge 1$, we have
\begin{equation*}
 q_0(\lambda)=\omega\vp\,\omega_0(x|\lambda|^{1/m})=\vp\,\omega_0(x|\lambda|^{1/m}).
\end{equation*}
Without loss of generality we can assume $\vp$ to be supported in a collar neighborhood of $\partial M=Y$ and consider it a function in $C^\infty([0,1), C^\infty(Y;\textup{End}(E|_Y)))$. Thus it has a Taylor expansion
\begin{equation*}
 \vp \sim \sum_{\nu=0}^\infty \vp_\nu(y) x^\nu
\end{equation*}
with $\vp_\nu\in C^\infty(Y;\textup{End}(E|_Y))$.  Since $\vp_\nu(y) x^\nu= \varrho^{-\nu}\kappa_\varrho \vp_\nu(y) x^\nu \kappa_\varrho^{-1}$, we get 
\begin{equation}\label{q0Expansion}
 \kappa_{\varrho}^{-1}q_0(\lambda) \kappa_\varrho \sim
 \sum_{\nu=0}^\infty \varrho^{-\nu} \vp_\nu x^\nu \omega_0.
\end{equation}

On the other hand, 
\begin{equation*}
 q_1(\lambda) \sim \sum_{j=0}^{\infty} q_{1,j}(\lambda)
\end{equation*}
in the sense of Lemma~\ref{BSymbolExpansion}. In particular, $q_{1,j}\in C^\infty(\Lambda\minus\{0\},\ell^1(\K^{s,-m/2}))$ for $j\in\N_0$, and 
$q_{1,j}(\varrho^m\lambda)=\varrho^{-m-j} \kappa_\varrho\, q_{1,j}(\lambda)\kappa_\varrho^{-1}$ for every $\varrho>0$. Therefore,
\begin{equation*}
 \kappa_{\varrho}^{-1}q_1(\lambda) \kappa_\varrho \sim 
 \sum_{j=0}^{\infty} \varrho^{-m-j} q_{1,j}(\hat\lambda)
\end{equation*}
with $q_{1,0}(\hat\lambda)=\vp_0 B_\wedge(\hat\lambda) \omega_1(x)$.
This, together with \eqref{AKaExpansion}, \eqref{q0Expansion}, and \eqref{LrhoExpansion} (without $\lambda\omega_0$), leads to the expansion \eqref{pAsympExp}. Observe that, on $\Dom_{\max}/\Dom_{\min}$,
\begin{equation*}
 \vp_0(1-B_\wedge(\hat\lambda)(A_\wedge-\hat\lambda))\omega_0\theta=
 \vp_0(1-B_\wedge(\hat\lambda)(A_\wedge-\hat\lambda))\theta,
\end{equation*}
and therefore, the principal component $p_{00}(\lambda)$ is as claimed.

For the same reasons as in the proof of Proposition~\ref{FStructure}, if $A$ has coefficients independent of $x$ near $\partial M$, there are no $\log$ terms in the expansion of $p(\lambda)$.
\end{proof}

{\bf Acknowledgments.} 
We are very grateful to the Mathematisches Forschungsinstitut Oberwolfach for their excellent support with the ``Research in Pairs'' program. The research presented in this paper was initiated during our stay at the institute.

%%%%%%%%%%%%%%%%%%%%%%%%%%%%%%%%%%%%%%%%%%%%%%%%%%%%%%%%%%%%%%%%%%%%%

\end{document}